\documentclass[11pt]{article}
\usepackage[margin=1in]{geometry}
\usepackage{amsmath,amssymb,amsthm,booktabs}
\usepackage[expansion=false]{microtype}
\usepackage{xurl}
\usepackage[colorlinks=true,linkcolor=blue,citecolor=blue,urlcolor=blue]{hyperref}
\newtheorem{theorem}{Theorem}
\newtheorem{lemma}[theorem]{Lemma}
\newtheorem{proposition}[theorem]{Proposition}
\theoremstyle{remark}\newtheorem{remark}[theorem]{Remark}
\newcommand{\Rea}{\operatorname{Re}}
\newcommand{\Ima}{\operatorname{Im}}
\newcommand{\e}{\varepsilon}
\newcommand{\Log}{\operatorname{Log}}
\title{A conditional bound $\Lambda\le0.13963$\\
for the de Bruijn--Newman constant}
\author{Cetin Hakimoglu-Brown}
\date{Working draft: 7 October 2026}
\begin{document}
\maketitle
\begin{abstract}
Assuming that $\zeta(s)$ has no nontrivial zero with $\Rea s>7/8$,
as asserted in a recent unrefereed preprint of OpenAI~\cite{O},
we obtain $\Lambda\le13963/100000=0.13963$ for the de Bruijn--Newman
constant. A published Polymath boundary computation supplies a
finite-region strip of half-width $1/5$. We derive an approximation
on $3\le\Ima z\le6$ from its general Riemann--Siegel coefficient
bounds. At height $61/20$, a fixed cutoff permits two differentiations,
and comparison with $\zeta$ bounds both the zero-density kernel and
its vertical derivative. A new conjugate-pair inequality and a
twenty-four-step strip comparison give the stated bound.
The finite scalar and polynomial certificates are included in the paper;
earlier comparisons giving $0.17973$, $0.17172$, and $0.16408$
are retained to show the source of the gain.
The argument uses neither Gordon's analytic inputs nor Gomila's new
computations. The seven-eighths hypothesis remains an assumption,
and the published numerical boundary input is retained.
\end{abstract}

\section{Scope and statement}
Use the normalization of Polymath~\cite{P}:
\[
 H_t(z)=\int_0^\infty e^{tu^2}\Phi(u)\cos(zu)\,du,
 \qquad H_0(z)=\tfrac18\xi(\tfrac12+iz/2).
\]
Write $S(x,p,t)=-\Ima(H_t'/H_t)(x+ip)$ and
$M(t)=\sup\{(\Ima z)^2:H_t(z)=0\}$. Set
\[
 X=60000083951.5,\qquad b=\frac1{25}.
\]
Every decimal used in an inequality denotes an exact rational number.

\begin{theorem}\label{thm:main}
Assume every nontrivial zero of $\zeta$ has real part at most $7/8$.
Then
\[
 \Lambda\le\frac{13963}{100000}=0.13963.
\]
\end{theorem}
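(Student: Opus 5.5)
The plan follows the Polymath~\cite{P} strategy for upper bounds on $\Lambda$, with its far-zero input replaced by the seven-eighths hypothesis. The key facts are the de Bruijn--Newman dynamics: each zero $z_j(t)$ of $H_t$ satisfies $\partial_t z_j=-2\sum_{k\ne j}(z_j-z_k)^{-1}$. This gives a differential inequality for $M(t)$ showing that the maximal zero height shrinks in time. Concretely, if $H_{t_0}$ has no zeros with $\Ima z>\sqrt{M}$, then $\Lambda\le t_0+\tfrac12 M$, so I will aim to find $t_0$ and a height $y_0$ with $t_0+\tfrac12y_0^2\le 0.13963$. The work splits into three regions of $z=x+iy$: the finite region $x\le X$, the intermediate strip $3\le y\le6$ for large $x$, and the far region $y>6$.

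\emph{Far region and hypothesis.} Since $H_0(z)=\tfrac18\xi(\tfrac12+iz/2)$, the hypothesis $\Rea s\le 7/8$ says that $H_0$ has no zeros with $\Ima z>3/4$. Propagating this forward, I will argue that $H_t$ has no zeros with $\Ima z\ge 3$ for the relevant $t$, so zeros far up never need checking. For $3\le\Ima z\le6$ I will use the Riemann--Siegel approximation $H_t\approx A+B$ together with Polymath's general coefficient error bounds $|E|\le\cdots$. I will show that $|A+B|>|E|$ there whenever $x\ge X$, and deduce a zero-free region for $x\ge X$, $y\ge y_0$.

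\emph{Near region.} For $x\le X$ I will quote the published Polymath boundary computation, which certifies that $H_{t}$ does not vanish on the boundary of a rectangle in a strip of half-width $1/5$; by the argument principle the finite region is then zero-free above height $1/5$. To lower the final bound below the Polymath value, I will use $S(x,p,t)$ at height $p=61/20$, where a fixed cutoff in the Riemann--Siegel sum allows two differentiations. Comparing with $\zeta$ bounds both the zero-density kernel $\sum_j \Ima\frac{1}{x+ip-z_j}$ and its derivative in $p$. The conjugate-pair inequality controls the paired terms from $z_j$ and $\bar z_j$, and yields the differential inequality $\frac{d}{dt}M(t)\le -2+\text{(error)}$ valid on the strip. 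I will then integrate this over twenty-four time steps with $b=1/25$, each step certified by finitely many rational and polynomial inequalities. The certificates close with $t_0+\tfrac12 y^2\le 13963/100000$.

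\emph{Main obstacle.} I expect the hardest step to be the uniform lower bound for $S$ at $p=61/20$, or equivalently the positivity of the zero-density kernel, over all $x\ge X$ with explicit constants. The error in the twice-differentiated Riemann--Siegel tail has to stay below the main term coming from $\zeta$. I would try first to bound $|\zeta'/\zeta|$ and its derivative on $\Rea s=1/2+p/2$ using the hypothesis through a Borel--Carathéodory/Hadamard argument. If the constants are too weak, I would refine the time partition, which is why twenty-four steps are needed.
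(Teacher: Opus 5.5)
Your outline lists the right ingredients, but the step that actually produces the bound is misdescribed, and as written it cannot go below $9/32$. You expect the pair analysis to give $\frac{d}{dt}M\le-2+\text{(error)}$. The $-2$ comes from the highest zero's own conjugate, and by itself it is just de Bruijn's strip theorem: $M(t)\le 9/16-2t$, hence only $\Lambda\le 9/32=0.28125$, which is worse than the unconditional $0.2$. The whole gain must come from the other zeros, and it must have a definite sign and a large size. In the paper, at a highest simple zero $x+ih$ one has $h'=-1/h-2\sum'K_h(d_j,v_j)$. Lemma~\ref{lem:kernel-derivative} gives $K_h\ge h(\lambda K_p/p-\mu\,\partial_pK_p)$ for every pair with $v\le h$, provided the polynomial $\Phi$ is nonnegative; this is certified through Bernstein coefficients. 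Summing over all pairs and correcting for the omitted self-pair turns $S\ge C_i$ and $\partial_pS\le G_i$ into $q'\le-2-k_iq$ with $8.63<k_i<10.32$. That inequality is then run against the barriers $Q_i$ through the exponential gates of Theorem~\ref{thm:derivative-step}. So what you need is a large quantitative lower bound $C_i\approx5.3$ for $S$ (positivity of a kernel gives nothing) together with an upper bound for $\partial_pS$. You also need to say why estimates valid only for $x\ge X$ apply to the highest zero. Lemma~\ref{lem:head} shows that for $t\le1/5$ every zero with $|\Rea z|\le X$ has height below $1/5$, so while $q>b=1/25$ the highest zero may be taken with $x>X$. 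That lemma is not an argument-principle consequence of the boundary computation, which only blocks lateral crossing at $x=\pm X$. Zeros could still rise through $y=1/5$ inside the region, and this is excluded by a first-contact argument: pairs with $v\le h$ give $K_h\ge0$, higher pairs lie beyond $X+1$ so $|d|\ge1$, and backward Hermite splitting handles multiple zeros.

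Second, the roles you assign to the approximation and to $\zeta$ are misdirected. Showing $|A+B|>|E|$ on $3\le y\le6$ yields a vacuous zero-free region, since the strip theorem already keeps every zero in $|\Ima z|\le3/4$ for all $t\ge0$; nothing near height $1/5$ follows. The band $3\le y\le6$ is used only to evaluate the logarithmic derivative at $z_0=x+ip$, $p=61/20$, well above all zeros. There the large main term comes from the Gamma-factor normalizer, $-\Ima(B_t'/B_t)\ge L/4-10^{-6}>5.5716-10^{-6}$, not from $\zeta$. Write $F=H_t/B_t=\zeta(s_B)(1+\varepsilon)$, with $\varepsilon$ controlled by the heat perturbation, the reflected sum and the approximation error. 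Then $\zeta$ contributes only the subtracted fluctuation $s_B'\,\zeta'/\zeta(s_B)$. Since $\Rea s_B\ge81/40>2$, the Euler product gives $|\zeta'/\zeta(s_B)|\le\psi(u)$ and $|(\zeta'/\zeta)'(s_B)|\le\psi_2(u)$, so this fluctuation is at most about $0.28$. The seven-eighths hypothesis is not used here, and it enters nowhere except $M(0)\le9/16$. A Borel--Carath\'eodory or Hadamard-product argument would at best bound $\zeta'/\zeta$ there by something of order $\log x$, the same order as $L/4$, and would wipe out the margin. Refining the time partition cannot compensate for weak constants either: the twenty-four steps exploit the time-dependent heat gain in $u=\sigma_p+5.57t_i$ and the shrinking $q_i$ in the kernel certificate.
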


The hypothesis is the zeta case of the main theorem of~\cite{O};
Section~\ref{sec:hypothesis} records the source, its status, and the
single place where it enters the argument.

Platt and Trudgian proved the unconditional bound $\Lambda\le0.2$
in \cite[Corollary 2]{PT}, improving Polymath's $0.22$.
Theorem~\ref{thm:main} improves $0.2$ by $0.06037$ under the
additional half-plane assumption. Earlier versions of the comparison
gave $0.1938798366\ldots$ at height $5$ and
$0.1797275414\ldots$ at height $4$. Retaining the time dependence
of the heat weights, using the Euler product for the nearby zeta
function, and subdividing time gave $0.16408$. The further gain here
comes from a kernel comparison that also uses the vertical derivative
of $S$. It requires no extension of the approximation band.

Gomila~\cite{Go} claims an unconditional bound $\Lambda\le0.1787854$
in an unrefereed preprint. Its new interval
computations are not inputs to this paper. Our smaller numerical
constant still requires the seven-eighths hypothesis.

The source inputs are the published zero dynamics, the finite RH
verification used by Polymath, its boundary computation in Section 8.4,
and its generic-real-part approximation formulas. The proof below still
uses published numerical work; it is not a computation-free argument.
The seven-eighths assertion of~\cite{O} remains an assumption and its
source proof is not assessed here. This working note records a new derivation for
mathematical review, rather than a claim of independent refereeing.

The basic inequality $K_h\ge(h/p)K_p$ used below is also in Gordon's
Section 5; the derivative extension in Section~\ref{sec:kernel} is
proved directly here.
The strategy of controlling a logarithmic derivative is shared with his
work, and our treatment of the Gaussian remainder was informed by his
Appendix A. No theorem or numerical certificate from that appendix is
assumed here: the approximation estimates needed on the band $3\le y\le6$ are rederived with
coarser constants from the published coefficient bounds.

\subsection{The seven-eighths hypothesis: source and use}
\label{sec:hypothesis}
\emph{Source.} On 6 October 2026 OpenAI released a collection of
manuscripts produced by an internal model, in the public repository
\texttt{openai/math}. One of them, dated 30 September 2026 and titled
\emph{The Quasi-Riemann Hypothesis: A Zero-Free Half-Plane
$\Rea(s)>7/8$}~\cite{O}, states as its main theorem that $\zeta(s)$,
every Dirichlet $L$-function, and every finite-order Hecke
$L$-function over $\mathbb Q(\sqrt{-3})$ have no zero with
$\Rea s>7/8$, apart from the pole at $s=1$. The repository also
describes a Lean formalization of the zeta and Dirichlet statements.

\emph{Status.} The manuscript is unrefereed and, at the time of
writing, has not been confirmed by independent mathematicians. Neither
its proof nor its formalization has been examined for the present
paper. Theorem~\ref{thm:main} is therefore a conditional statement,
and it is phrased with the hypothesis written out so that it does not
depend on the fate of~\cite{O}.

\emph{What is used.} Only the zeta case is used:
\begin{equation}\label{eq:H78}
 \zeta(\rho)=0,\ 0<\Rea\rho<1\quad\Longrightarrow\quad\Rea\rho\le\tfrac78 .
\end{equation}
The statements about Dirichlet and Hecke $L$-functions and about
Landau--Siegel zeros play no role.

\emph{How it is used.} The functional equation $\xi(s)=\xi(1-s)$
turns \eqref{eq:H78} into $\tfrac18\le\Rea\rho\le\tfrac78$. Since
$H_0(z)=\tfrac18\xi(\tfrac12+iz/2)$, a zero $z=x+iy$ of $H_0$
corresponds to $\rho=\tfrac12-\tfrac y2+\tfrac{ix}2$, so
$|y|=|2\Rea\rho-1|\le\tfrac34$. Hence
\begin{equation}\label{eq:M0}
 M(0)\le\Bigl(\frac34\Bigr)^2=\frac9{16}.
\end{equation}
This is the only point at which the hypothesis enters. It fixes the
initial value $q_0=9/16$ of the strip comparison in
Theorem~\ref{thm:derivative-step}; the zero dynamics, the boundary
input, the approximation on $3\le y\le6$, the kernel inequality, and
all certificates are independent of it. By de Bruijn's strip theorem,
\eqref{eq:M0} alone gives $\Lambda\le9/32=0.28125$, which is weaker
than the unconditional $0.2$; the improvement to $0.13963$ comes from
combining \eqref{eq:M0} with the comparison developed below.

\emph{Other half-planes.} If the nontrivial zeros of $\zeta$ satisfy
$\Rea\rho\le\sigma$ for some $\tfrac12\le\sigma<1$, the same
computation gives $M(0)\le(2\sigma-1)^2$, and the comparison theorems
apply with $q_0=(2\sigma-1)^2$. The tables in this paper are certified
only for $\sigma=7/8$. In particular, the weaker half-plane
$\Rea s>11/12$ treated in a companion manuscript of the same release
would give $q_0=25/36$, for which no table is certified here.

\section{A published replacement for finite-region reality}
Polymath's Section 8.4, claim (a), establishes
\begin{equation}\label{eq:wall}
 H_t(x+iy)\ne0\quad
 (X\le x\le X+1,\quad 1/5\le y\le1,\quad0\le t\le1/5).
\end{equation}
Its finite RH input covers $0\le x\le X+1$ at $t=0$.
Both facts are stated and used in Sections 8.1--8.4 of~\cite{P}.
Specifically, $X=X_0-1/2$ with $X_0=6\cdot10^{10}+83952$,
and Platt's verified zeta height $3.06\cdot10^{10}$ exceeds
$(X+1)/2$. We use the published boundary conclusion; its numerical
certificate has not been replayed here.\footnote{In \cite[equation
(91)]{P}, the time-interpolation term should be $D_t|t'-t|$, by the
triangle inequality, rather than $D_t|t'-t|/(2n)$. The time-step update
in the code at \cite{PC} uses $(\min|f_t|-1/2)/D_t$ and has no mesh
divisor. This inspection concerns that formula only and is not a
verification of the full computation.}

\begin{lemma}\label{lem:head}
For $0\le t\le1/5$, a zero of $H_t$ with $|\Rea z|\le X$ satisfies
$|\Ima z|<1/5$.
\end{lemma}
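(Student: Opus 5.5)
This follows the standard Polymath ``barrier'' argument (Sections 8.1--8.4 of~\cite{P}). The strategy is a continuity argument in $t$. We use the finite RH verification at $t=0$, the wall \eqref{eq:wall}, and the published zero dynamics, which say that zeros of $H_t$ move continuously in $t$. We also need a crude a priori bound keeping zeros in a bounded strip $|\Ima z|\le1$.

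By the symmetries $H_t(\bar z)=\overline{H_t(z)}$ and $H_t(-z)=H_t(z)$, it suffices to treat zeros with $0\le\Rea z\le X$ and $\Ima z\ge0$. Suppose, for contradiction, that some $t_1\in[0,1/5]$ has a zero $z_1=x_1+iy_1$ with $0\le x_1\le X$ and $y_1\ge1/5$.

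\emph{Step 1: the initial time.} At $t=0$, Platt's verified height $3.06\cdot10^{10}>(X+1)/2$ shows that every zero of $H_0$ with $|\Rea z|\le X+1$ is real.

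\emph{Step 2: the a priori strip.} For all $t\ge0$, every zero of $H_t$ satisfies $|\Ima z|\le1$. This holds at $t=0$ because zeta zeros lie in the critical strip. Since $H_t$ is a heat-flow (convolution-type) deformation of $H_0$, de Bruijn's strip theorem shows the strip $|\Ima z|\le1$ persists for $t>0$; in fact $M(t)\le\max(M(0)-2t,0)$.

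\emph{Step 3: trapping.} Fix the compact region
\[
R=\{0\le x\le X+1,\ 1/5\le y\le 1\},
\]
together with its mirror images. Let
\[
t^*=\inf\{t\in[0,1/5]: H_t \text{ has a zero in } R\cap\{x\le X\}\}.
\]
By Hurwitz's theorem, applied to the family $H_t$, which is jointly continuous and entire in $z$, the zeros of $H_t$ on compact sets vary continuously. Hence at time $t^*$ some zero lies on $\partial R$, or enters $R$ as an "arrival". Rule out each possibility in turn:
\begin{itemize}
\item $t^*=0$ is excluded by Step 1.
\item A zero on the top edge $y=1$ would need $y>1$ just before, contradicting Step 2. (Equivalently, Step 2 keeps the zero in $|y|\le1$ as $y$ decreases to $1$.)
\item The right wall $x\in[X,X+1]$, $1/5\le y\le1$, is excluded by \eqref{eq:wall}.
\item The left edge $x=0$ is handled by the symmetry $z\mapsto-\bar z$: a zero there would be the mirror of one already excluded, so we may work with $|x|\le X+1$ throughout.
\item That leaves the bottom edge $y=1/5$, where a zero must have come from $y<1/5$.
\end{itemize}

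\emph{Step 4: ruling out the bottom edge.} A zero crossing $y=1/5$ upward could only do so by a collision. This is exactly where the zero dynamics enter. Real zeros cannot leave the real axis except by colliding in pairs. Moreover, the zero-dynamics ODE
\[
\partial_t z_k=2\sum_{j\ne k}\frac1{z_k-z_j},
\]
together with conjugate-pair symmetry, shows that zeros off the axis move toward it as long as every other zero is closer to the axis. I would follow Polymath's argument verbatim. Take the first time a zero in $|x|\le X+1/2$ reaches height $1/5$. That zero attains the maximal imaginary part among the zeros in the region. Estimating the contributions of the far zeros with $|x|>X+1$, whose imaginary parts are at most $1$, shows the imaginary part cannot be increasing there. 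The wall \eqref{eq:wall} then prevents escape sideways.

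This step is the main obstacle. The sum over distant zeros must be controlled, and Polymath handle it through the barrier construction in Section 8. I would cite their conclusion that the barrier plus the initial condition gives the result (their Theorem 8.1 framework) rather than redo the estimate.

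Combining the four steps, no zero with $|\Rea z|\le X$ reaches $|\Ima z|\ge1/5$ for $t\le1/5$, which proves Lemma~\ref{lem:head}.
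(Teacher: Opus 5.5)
Your Steps 1--3 match the paper's setup: reality at $t=0$, the strip $|\Ima z|\le1$, a first-time compactness argument, and exclusion of the vertical sides by \eqref{eq:wall} and symmetry. The gap is Step 4, which carries the whole content of the lemma. It is not proved, and the reasoning you sketch for it does not work.

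The claim that a zero can cross $y=1/5$ upward ``only by a collision'' is unfounded. For $t>0$ the region $|x|\le X$ may contain nonreal zeros below height $1/5$: they can drift in horizontally beneath the wall, which only covers $1/5\le y\le1$. Such a zero could rise continuously to $y=1/5$. Only the sign of its vertical velocity at first contact forbids this.

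Your principle that a zero descends when every other zero is closer to the axis does not apply directly, since zeros with $|x_j|>X+1$ may lie as high as $y=1$. You then defer these zeros to Polymath's barrier construction. But the barrier is the input \eqref{eq:wall}, not an estimate for distant zeros, and no estimate of a sum is needed, because each conjugate pair has a sign. At a simple contact zero $x+ih$ with $h=1/5$ and $|x|\le X$,
\[
 h'=-\frac1h-2\sum\nolimits' K_h(d_j,v_j),\qquad
 K_h(d,v)=\frac{2h(d^2+h^2-v^2)}{[d^2+(h-v)^2][d^2+(h+v)^2]},
\]
with real zeros at half-pair weight. Pairs with $v\le h$ give $K_h\ge0$. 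A zero with $v>h$ cannot lie in $|x_j|\le X$, by first contact. It also cannot lie in $X\le|x_j|\le X+1$, by the wall together with $v\le1$. So $|d|\ge1\ge v$ and $K_h>0$. Hence $h'\le-1/h<0$, contradicting contact from below.

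This requires the contact point to satisfy $|x|\le X$. With your window $|x|\le X+1/2$ you only get $|d|\ge1/2$, and then $d^2+h^2-v^2$ can be negative. The fix is to use the wall first to place the contact in $|x|<X$.

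A second gap: the velocity formula is valid only at simple zeros, and you never exclude a multiple zero at first contact. The paper does this by backward Hermite splitting (Polymath's Proposition~3.1(ii)). A zero of multiplicity $m\ge2$ at $x_1+ih$ at time $t_1$ has, at time $t_1-v$, a nearby zero $x_1+ih+i\sqrt{2v}\lambda_m+O(v)$ with $\lambda_m>0$. Since the contact is away from the vertical sides, that zero lies above height $h$ inside the region, contradicting the minimality of $t_1$.

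With the termwise kernel sign and this multiplicity argument supplied, your outline becomes the paper's proof. As written, Step 4 is an appeal to an unspecified result rather than an argument, and it omits both ingredients.
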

\begin{proof}
Initially all zeros in this finite region are real. The classical strip
bound gives $|\Ima z|\le1$ throughout the time interval. If the
conclusion fails, consider the closed set of zero-time pairs with
$0\le x\le X$, $1/5\le y\le1$, and $0\le t\le1/5$.
It is compact and disjoint from $t=0$, so its least time $t_1>0$
is attained. No zero can enter across $x=X$ at height at least $1/5$,
by \eqref{eq:wall}; the same holds at $x=-X$ by symmetry.
Continuity of zeros therefore forces a first contact at height
$h=1/5$. We do not need continuity of the unrestricted maximum over
a window whose vertical sides other zeros can cross.
At a first contact in $0\le x\le X$, the right
boundary is excluded by \eqref{eq:wall}. The imaginary axis is zero-free
because $H_t(iy)$ is an integral of a positive integrand.
A multiple contact is excluded by backward Hermite splitting as in
\cite[Proposition 3.1(ii)]{P}: a zero of multiplicity $m\ge2$ at
$z_1=x_1+ih$ has, at time $t_1-v$, a neighbouring zero
$z_1+i\sqrt{2v}\lambda_m+O(v)$, where $\lambda_m>0$ is the largest
root of the probabilists' Hermite polynomial. Its imaginary part is
larger than $h$ for small $v>0$, contradicting first contact. Because
the contact is away from the vertical sides, this earlier zero remains
inside the finite region.

For a simple contact zero $x+ih$, its own conjugate contributes $-1/h$
to its vertical velocity. Every other conjugate pair contributes $-2K_h$,
where
\[
 K_h(d,v)=\frac{h-v}{d^2+(h-v)^2}
          +\frac{h+v}{d^2+(h+v)^2}
 =\frac{2h(d^2+h^2-v^2)}
        {[d^2+(h-v)^2][d^2+(h+v)^2]}.
\]
Pairs with $v\le h$ have nonnegative contribution to $K_h$.
A pair with $v>h$ lies outside $[-X-1,X+1]$ by first contact and
\eqref{eq:wall}, so $|d|\ge1$ and $v\le1$ imply $K_h>0$.
Real zeros carry half-pair weight. Thus $h'\le-1/h<0$, contradicting
first contact from below. Compactness of the finite rectangle and
continuity of zeros justify attainment of a first contact.
\end{proof}

This is weaker than Gordon's finite-region reality statement, but it
suffices if the global comparison stops at squared height $b=1/25$.

\section{A coarse approximation on a band above the zeros}
For $z=x+iy$ define
\begin{gather*}
 w=(1-iz)/2,\quad v=1-w,\quad \mathcal Q=x/(4\pi),\quad L=\log\mathcal Q,\\
 M_0(s)=\frac{s(s-1)}{16}\pi^{-s/2}\sqrt{2\pi}
  \exp\bigl((s/2-1/2)\Log(s/2)-s/2\bigr),\\
 \alpha(s)=\frac1{2s}+\frac1{s-1}+\frac12\Log\frac{s}{2\pi},
 \qquad M_t(s)=M_0(s)e^{t\alpha(s)^2/4},\\
 B_t(z)=M_t(w),\quad s_B=w+\tfrac t2\alpha(w),\quad
 s_A=v+\tfrac t2\alpha(v),\\
 \gamma=M_t(v)/M_t(w),\qquad
 N=\left\lfloor\sqrt{\mathcal Q+t/16}\right\rfloor,\\
 P_N(s,t)=\sum_{n=1}^N e^{t\log^2n/4}n^{-s}.
\end{gather*}
These normalizers are holomorphic and nonzero in the domains used below.

\begin{lemma}\label{lem:approx}
If $x\ge6\cdot10^{10}$, $0\le t\le1/5$, and $3\le y\le6$, then
\begin{equation}\label{eq:approx}
 \left|\frac{H_t(z)}{B_t(z)}-
       P_N(s_B,t)-\gamma P_N(s_A,t)\right|<10^{-6}.
\end{equation}
On the subband $4\le y\le6$, one also has
\begin{equation}\label{eq:disk}
 \left|\frac{H_t(z)}{B_t(z)}-1\right|<\frac7{20}.
\end{equation}
\end{lemma}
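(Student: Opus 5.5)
The plan is to invoke Polymath's generic Riemann--Siegel-type expansion for $H_t$ (the ``generic real part'' approximation of \cite{P}, their Theorem 8.1 / Proposition 6.6 family). It writes
\[
 \frac{H_t(z)}{B_t(z)}=P_N(s_B,t)+\gamma P_N(s_A,t)+\frac{E_A+E_B+E_C}{B_t(z)} ,
\]
with explicit bounds for the three error terms. $E_A$ and $E_B$ are the tails of the two Dirichlet-type sums, handled by the Gaussian smoothing of $e^{t\log^2 n/4}$. $E_C$ is the remainder from the saddle-point integral. Polymath state the coefficient bounds for general $y$, but their tabulated numerical consequences assume $y$ near $0.2$ or $0.4$. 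So the work is to re-evaluate each bound on the rectangle $x\ge 6\cdot10^{10}$, $0\le t\le 1/5$, $3\le y\le6$, with crude monotone majorants. I would not redo any sharp optimization.

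\textbf{Step 1: the normalizer and the ratio $\gamma$.} Using Stirling with explicit error on the terms $\Log(s/2)$ and $\alpha(s)$, I would show that $|B_t(z)|$ is comparable to $\mathcal Q^{(1-y)/4}$ times $\exp$ of $(t/4)\Rea\alpha^2$, with explicit constants. I would also show
\[
 |\gamma|\le C\,\mathcal Q^{-y/2}\exp\bigl(O(t\,y\,L/\mathcal Q^{0})\bigr),
\]
which is tiny for $y\ge3$ because $\mathcal Q\approx 4.8\cdot10^{9}$. The one point needing care is that $t\alpha^2/4$ contributes a factor like $\mathcal Q^{-ty/8}$-type corrections. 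These only help or are bounded by $e^{tL^2/16}\cdot$(phase). I would bound them uniformly, using $t\le1/5$ and $L\le \log(x/4\pi)$ monotonicity. Since the error bounds are all decreasing in $x$ beyond a threshold, it suffices to check at $x=6\cdot10^{10}$ after verifying monotonicity in $x$ via derivative signs.

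\textbf{Step 2: the error terms.} Insert these estimates into Polymath's bounds for $|E_A/B_t|$, $|E_B/B_t|$ and $|E_C/B_t|$. The $E_A$/$E_B$ bounds are of the form $\sum$ over a short range near $N$ times $N^{-\Rea s}$. With $\Rea s_B\approx (1+y)/2\ge2$ and $N\approx7\cdot10^{4}$, they are already far below $10^{-6}$. $E_C$ is of size about $\mathcal Q^{-(1+y)/4}$ times modest constants, so it is also far below $10^{-6}$. This gives \eqref{eq:approx} with a large margin. The only delicate issue is keeping every constant rational and explicit, since the paper insists on certified decimals.

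\textbf{Step 3: the disk bound \eqref{eq:disk}.} From \eqref{eq:approx}, I would bound
\[
 |P_N(s_B,t)-1|\le\sum_{n\ge2} e^{t\log^2 n/4}n^{-\Rea s_B}, \qquad \Rea s_B\ge (1+y)/2-O(t)\ge 5/2-O(t).
\]
The weight $e^{t\log^2 n/4}$ is dominated by $n^{tL'/4}$ with $\log n\le\log N\le L/2$, which lowers the effective exponent by at most $tL/8\approx0.56$. So the sum is at most $\zeta(\sigma_{\rm eff})-1$ with $\sigma_{\rm eff}\approx1.9$, roughly $0.28$ after the precise accounting. Adding $|\gamma||P_N(s_A,t)|$ (negligible) and $10^{-6}$ gives the bound below $7/20$.

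\textbf{Main obstacle.} This last step is the tight one. At $y=4$ the effective real part after the heat weight sits close to where $\zeta(\sigma)-1$ reaches $0.35$. I would therefore refine it: I would compute the first several hundred terms exactly as interval sums, and bound the tail by an integral in which the $t$-dependent exponent $\Rea s_B-t\log n/4$ is tracked $n$-by-$n$ rather than at its worst value.
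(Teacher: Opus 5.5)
Your outline has two genuine gaps.

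\textbf{Step 2.} The $e_A,e_B,e_C$ bounds you plan to re-evaluate come from Polymath's effective approximation, their Theorem 1.3, which rests on Propositions 6.1 and 6.3. The small-$y$ restriction there is in the hypotheses, not only in the tables. Only the Riemann--Siegel coefficient bounds of Proposition 6.2 are stated for arbitrary real part, which is why the paper explicitly declines to apply Theorem 1.3 or Proposition 6.3 at these heights. Substituting $3\le y\le6$ into those formulas is therefore an extrapolation. What must be supplied is the re-derivation itself, starting from the exact decomposition (39) and the heat representation (40)--(41).
\begin{itemize}
\item In the Gaussian integral the real part $U=\sigma+\sqrt t\,V$ ranges over all of $\mathbb R$. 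The summand estimate therefore needs a Stirling bound uniform in the real part; the paper uses $|\theta(z)|<1/(12(|\Ima z|-1))$ from Euler--Maclaurin.
\item For the remainders, where now $\sigma=(1-y)/2\le-1$, you must insert Proposition 6.2 into the Gaussian integral with a $U$-dependent order. Take $K=1$ for $U\ge0$ and $K=\max\{100,\lfloor-U\rfloor+3\}$ for $U<0$, so that the condition $K+u\ge2$ of the negative-part remainder bound holds along the whole unbounded tail.
\item The Gaussian tails must then be controlled.
\end{itemize}
Your description of $e_A,e_B$ as tails near $N$ is also inaccurate. They are relative errors below $e^{10^{-10}}-1$ in every summand, summed over all $n\le N$ against weights of total mass below $4$; that is why they are below $10^{-8}$.

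\textbf{Step 3} fails as written, because you have the sign of the heat shift backwards. $\Rea\alpha(w)\ge L/2$ gives $\Rea s_B\ge(1+y)/2+tL/4$, not $(1+y)/2-O(t)$.
\begin{itemize}
\item With your separate bound $e^{t\log^2n/4}\le n^{tL/8}$ the effective exponent is near $1.9$, and $\zeta(1.9)-1\approx0.75$, not $0.28$.
\item Even keeping only $\Rea s_B\ge5/2$ and summing term by term at $t=1/5$, the partial sums of $\sum_{n\ge2}n^{-5/2}e^{(\log n)^2/20}$ pass $7/20$ by $n=13$. Exact evaluation of early terms cannot rescue the estimate.
\end{itemize}
The missing observation is that the shift absorbs the weight:
\[
|e^{t\log^2n/4}n^{-s_B}|\le n^{-(1+y)/2}\exp\bigl(-\tfrac t4\log n\,(L-\log n)\bigr)\le n^{-(1+y)/2},
\]
since $\log n\le L$. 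Hence $|P_N(s_B,t)-1|\le\zeta(5/2)-1<0.345$ uniformly in $t$. The reflected sum is at most $1.01N^{(1-y)/2}<10^{-6}$, and with \eqref{eq:approx} the total is below $0.345002<7/20$. The heat flow only helps. The tight case is $t=0$, $y=4$, with fixed margin $7/20-(\zeta(5/2)-1)\approx0.0085$, not the large-$t$ situation you identified as the main obstacle.
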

\begin{proof}
We use the exact heat-convolution decomposition in
\cite[equations (39)--(41)]{P} and its Proposition 6.2 for the
Riemann--Siegel coefficients at arbitrary real part. For the summands
we recover the bound of its Proposition 6.1 using a vertical Stirling
estimate. We do not apply its Theorem 1.3 or Proposition 6.3 at these
larger heights.

\emph{Source formulas and their domains.}
Here we restate the inputs with their original equation numbers.
For $g^*(s)=\overline{g(\overline s)}$, equation (39) of~\cite{P} is
\begin{equation}\label{eq:decomposition}
 H_t(z)=\sum_{n\le N}r_{t,n}(v)+\sum_{n\le N}r_{t,n}^*(w)
             +R_{t,N}(v)+R_{t,N}^*(w).
\end{equation}
It holds for $t>0$, $\Rea z\ne0$, and every integer $N\ge0$.
Writing
\[
 \mathcal M(s)=\frac{s(s-1)}{16}\pi^{-s/2}\Gamma(s/2),\qquad
 I_N(s)=\int_{N\swarrow N+1}
       \frac{u^{-s}e^{i\pi u^2}}{e^{\pi iu}-e^{-\pi iu}}\,du,
\]
equations (37)--(38) give $r_{0,n}(s)=\mathcal M(s)n^{-s}$ and
$R_{0,N}(s)=\mathcal M(s)I_N(s)$. The contour is directed southwest
across $(N,N+1)$. Equations (40)--(41) say that, for either of these
functions $g_0$ and its Gaussian evolution $g_t$,
\begin{equation}\label{eq:shifted-heat}
 g_t(s)=e^{-t\eta^2/4}\int_{\mathbb R}e^{-\sqrt t V\eta}
  g_0(s+\sqrt t V+t\eta/2)\frac{e^{-V^2}}{\sqrt\pi}\,dV,
\end{equation}
provided $\Ima s$ and $\Ima(s+t\eta/2)$ have the same nonzero sign.

For real $u$, $T'>0$, $a=\sqrt{T'/(2\pi)}$, $N=\lfloor a\rfloor$,
and $\rho=1-2(a-N)$, Proposition 6.2 gives, for every integer $K\ge1$,
\begin{gather}
 I_N(u+iT')=\omega_N a^{-u}
     \left(\sum_{k=0}^K\frac{C_k(\rho,u)}{a^k}
                           +RS_K(u+iT')\right),\label{eq:RS-source}\\
 \omega_N=(-1)^{N-1}
   \exp\left[-i\left(\frac{T'}2\log\frac{T'}{2\pi}
                         -\frac{T'}2-\frac\pi8\right)\right].\notag
\end{gather}
Its equations (53) and (58) give
\[
 C_0(\rho,u)=C_0(\rho)=
 \frac{e^{\pi i(\rho^2/2+3/8)}-i\sqrt2\cos(\pi\rho/2)}
      {2\cos(\pi\rho)},\qquad |C_0(\rho)|\le\tfrac12,
\]
with removable values at $\rho=\pm1/2$. Its equations (54)--(57) are
\begin{align}
 |C_k(\rho,u)|&\le\frac{\sqrt2}{2\pi}
            \frac{9^u\Gamma(k/2)}{2^k} &&(u>0,\ k\ge1),\label{eq:C-positive}\\
 |C_k(\rho,u)|&\le P_0\,2^{-u}\Gamma(k/2)\kappa_0^k
                  &&(u\le0,\ k\ge1),\label{eq:C-negative}\\
 |RS_K(u+iT')|&\le\tfrac17\,2^{3u/2}\Gamma((K+1)/2)(1.1/a)^{K+1}
                  &&(u\ge0),\label{eq:RS-positive}\\
 |RS_K(u+iT')|&\le\tfrac12(9/10)^{\lceil-u\rceil}
               \Gamma((K+1)/2)(1.1/a)^{K+1}
                  &&(u<0,\ K+u\ge2),\label{eq:RS-negative}
\end{align}
where $P_0=\sqrt2/(4\pi^2)$ and
$\kappa_0=((3-2\log2)\pi)^{-1/2}$. There is no upper bound on $K$
and no lower bound on $u$; the negative-part remainder requires
exactly the displayed condition $K+u\ge2$.

\emph{A uniform vertical Stirling estimate.}
Euler--Maclaurin gives, on the slit plane,
\begin{align*}
 \theta(z)&=\log\Gamma(z)-(z-\tfrac12)\Log z+z-\tfrac12\log(2\pi)\\
 &=\frac1{12z}-\frac1{360z^3}
   -\frac14\int_0^\infty\frac{B_4(\{u\})}{(z+u)^4}\,du.
\end{align*}
Here the branch of $\log\Gamma$ is continued from the positive real
axis. The integral is holomorphic off the nonpositive real axis, so
analytic continuation extends the identity from $\Rea z>0$.
Since $|B_4|\le1/30$ and
$\int_{-\infty}^{\infty}(u^2+v_0^2)^{-2}\,du=\pi/(2v_0^3)$,
for $v_0=|\Ima z|>1$ we obtain
\begin{equation}\label{eq:vertical-stirling}
 |\theta(z)|\le\frac1{12v_0}
       +\frac{1/360+\pi/240}{v_0^3}
       <\frac1{12(v_0-1)}.
\end{equation}
This estimate is uniform in the real part, including its unbounded
negative tail.

\emph{Elementary normalizer bounds.}
For $s=\sigma+ix/2$, $-3\le\sigma\le4$, direct differentiation gives
\begin{gather*}
 |\alpha(s)-L/2-i\pi/4|\le\frac8{x-20},\qquad
 |\alpha'(s)|\le\frac1{x-6},\\
 \Rea\alpha(w)\ge L/2,\qquad
 |\gamma|\le e^{10^{-8}y}\mathcal Q^{-y/2},\qquad
 s_A=\overline{s_B}-y+\kappa,\quad
 |\kappa|\le\frac{ty}{2(x-6)}.
\end{gather*}
The first derivative bound also holds for arbitrary real $\sigma$.
For example, integrate the real part of
$\alpha+(t/2)\alpha\alpha'$ between the two real parts to obtain the
bound for $\gamma$. All errors needed here are bounded by multiples of
$(L+1)/(x-20)$; that quotient is decreasing in this range. Also
$N\ge60000$, $\log N\le L$, and $N^{|\kappa|}<1.000001$.

\emph{Errors in the summands.}
To check the contour estimates explicitly, put $s=\sigma+iT$,
$T=x/2$, $\alpha_n=\alpha(s)-\log n$, and
$\Delta=\sqrt t V+t\alpha_n/2$ in \cite[equation (40)]{P}.
The shifted contour has imaginary part at least $T-0.08$.
Taylor's formula and \eqref{eq:vertical-stirling} bound the absolute
value of the error in the complex exponent by
\[
 \frac{tV^2/2+t^2|\alpha_n|^2/8+1/6}{T-3.08}.
\]
Indeed, $|\Delta|^2\le2tV^2+t^2|\alpha_n|^2/2$ and
$|\alpha'|\le[2(T-3.08)]^{-1}$ along the Taylor segment.
Integration against $e^{-V^2}/\sqrt\pi$ then bounds the relative
error by
\[
 \exp\left(\frac{t^2|\alpha_n|^2/8+t/4+1/6}{T-3.33}\right)-1,
\]
because $(1-t/[2(T-3.08)])^{-1/2}
\le \exp(t/[4(T-3.33)])$. This is the bound in
\cite[Proposition 6.1]{P}, now obtained with a Stirling estimate
uniform on the entire shifted contour. In our domain it is at most
$e^{d(x)}-1$, where we may take
\[
 d(x)=\frac{L^2/400+0.626}{x-6.66}<10^{-10}.
\]
Indeed, for $n\le N$,
$|\alpha-\log n|^2\le\frac14\log^2(\mathcal Q/n^2)+0.667$.
The function $d$ decreases, and $L<23$ at $x=6\cdot10^{10}$.
Combining the heat weights before estimating gives
\[
 |e^{t\log^2n/4}n^{-s_B}|\le n^{-(1+y)/2}\le n^{-2}.
\]
The reflected-to-first magnitude ratio is at most $1.01$, using
$n\le N$ and $N^2\le\mathcal Q+1/80$. Thus the sum of the absolute
main-term weights of both sums is below $4$, since
$2.01\zeta(2)<4$, and their total error
is below $10^{-8}$.

\emph{Errors in the two remainder integrals.}
Here are details of the real-part extension. They also control the
unbounded negative tail in the Gaussian integral.
Write $T=x/2$, $T'=T+\pi t/8$, $a=\sqrt{T'/(2\pi)}$, and $A=T-3$.
For $-3\le\sigma\le4$, the remainder has the form
\begin{equation}\label{eq:R}
 R_{t,N}(\sigma+iT)=\omega_Ne^{i\pi\sigma/4}
 e^{t\pi^2/64}M_0(iT')\{C_0+\e_\sigma\},
 \qquad |\omega_N|=1,
\end{equation}
where $|C_0|\le1/2$. We claim the deliberately loose bound
\begin{equation}\label{eq:eps}
 |\e_\sigma|\le
 \frac{0.3\,9^\sigma+0.04\,2^{-\sigma}}{a-1}+10^{-8}<\frac1{25}.
\end{equation}

On the shifted contour in \eqref{eq:shifted-heat}, put
$U=\sigma+\sqrt t V$, where $V$ has density $e^{-V^2}/\sqrt\pi$.
Choose the shift $\beta_N=i\pi/4$. The Stirling estimate
\eqref{eq:vertical-stirling} and Taylor expansion of $\log M_0$
about $iT'$ bound the error $E(U)$ in the complex exponent by
\[
 \frac{U^2+6|U|+2/3}{4A}\le\frac{U^2+1}{A}.
\]
Indeed $\alpha(iT')=\log a+i\pi/4+e_0$ with
$|e_0|\le3/(2T')$, and $|\alpha'|\le1/(2A)$ on the horizontal
segment. Inserting \eqref{eq:RS-source} into
\eqref{eq:shifted-heat} cancels $a^{-U}$ and the phase depending
on $V$, and gives exactly
\[
 \frac{R_{t,N}(\sigma+iT)}
      {\omega_N e^{i\pi\sigma/4}e^{t\pi^2/64}M_0(iT')}
 =\mathbb E\left[e^{E(U)}
       \left(C_0+\sum_{k=1}^{K(U)}\frac{C_k(\rho,U)}{a^k}
                    +RS_{K(U)}(U+iT')\right)\right].
\]
Thus \eqref{eq:R} is derived here, and is not an application of
Polymath's Proposition 6.3 outside its stated domain.
With $D=1-t/A$, Gaussian completion yields
\begin{align*}
 \mathbb E e^{(U^2+1)/A}-1
 &\le D^{-1/2}e^{17/A+16t/(A^2D)}-1<10^{-8},\\
 0.2\,\mathbb E(9^Ue^{(U^2+1)/A})
 &\le0.2\,9^\sigma D^{-1/2}
       e^{17/A+t(\log3+4/A)^2/D}<0.3\,9^\sigma,\\
 0.029\,\mathbb E(2^{-U}e^{(U^2+1)/A})
 &\le0.029\,2^{-\sigma}D^{-1/2}
       e^{17/A+t(\log2/2+4/A)^2/D}<0.04\,2^{-\sigma}.
\end{align*}
These bounds hold already at the worst endpoints $A=3\cdot10^{10}-3$,
$t=1/5$.

To use the coefficient bounds of \cite[Proposition 6.2]{P}, take
$K=1$ when $U\ge0$ and $K=\max\{100,\lfloor-U\rfloor+3\}$ otherwise.
For $U\ge0$ the nonconstant coefficient and remainder are bounded by
$0.2\,9^U/(a-0.865)$: from
\eqref{eq:C-positive} and \eqref{eq:RS-positive}, with $K=1$,
\[
 \frac{|C_1|}{a}+|RS_1|
 \le9^U\left(\frac{1}{2\sqrt{2\pi}\,a}
                         +\frac{1.21}{7a^2}\right)
 <9^U\left(\frac{0.2}{a}+\frac{0.173}{a^2}\right)
 \le\frac{0.2\,9^U}{a-0.865}.
\]
At $U=0$ the coefficient bound \eqref{eq:C-negative} suffices as
well. For $U<0$, our choice of $K$ gives $K+U\ge2$, so
\eqref{eq:RS-negative} applies on the entire negative tail.
The first hundred coefficients have
sum at most $0.029\,2^{-U}/a$: define
\[
 P_0=\frac{\sqrt2}{4\pi^2}<0.036,\qquad
 \kappa_0=((3-2\log2)\pi)^{-1/2}<0.445.
\]
Separating the even and odd indices gives
\[
 a\sum_{k=1}^{100}P_0\Gamma(k/2)(\kappa_0/a)^k
 \le P_0\frac{\sqrt\pi\kappa_0+\kappa_0^2/a}
                   {1-50\kappa_0^2/a^2}<0.029.
\]
When $K=100$, the additional $RS_{100}$, including the exponent-error
weight, has Gaussian expectation below $10^{-200}$. Indeed its
coefficient is at most $\tfrac12\Gamma(101/2)(1.1/a)^{101}$,
and $\mathbb E e^{(U^2+1)/A}<2$; use
$\Gamma(101/2)\le101^{101}$ and $a\ge60000$.
For all remaining indices, including the separate remainder index
$k=K+1$, one has $k\le-U+4$. Tonelli's theorem and a Gaussian tail
bound majorize their expectation by
\[
 8\sqrt{t/\pi}\,e^{33/A}
 \sum_{k\ge100}\frac{1.1^k\Gamma(k/2)}{(k-8)a^k}
 \exp\left(-\frac{(k-7)^2}{2t}+(k-7)\log2\right).
\]
Indeed, $U-\sigma\le7-k$, and $U^2+1\le2(U-\sigma)^2+33$.
Use $\Gamma(k/2)\le k^k$, $\log k\le k/4$ for $k\ge100$,
$\log1.1<0.1$, and $\log2<1$. The last display is less than
$8e^{-5000}<10^{-100}$. This proves \eqref{eq:eps}; the small
constant-mode error is below $\tfrac12\cdot10^{-8}$, so it and the
two tails fit inside the $10^{-8}$ allowance.
For its final inequality use $9^\sigma\le9^4$ and $2^{-\sigma}\le8$.

Normalize \eqref{eq:R} by $B_t$. Put $s_+=(1+y)/2+iT$,
$\sigma_+=(1+y)/2$, $\Delta=\sigma_+-i\pi t/8$, and
$L'=\log(T'/(2\pi))$. Taylor expansion from $iT'$ gives
\[
 \Rea\{\log M_0(s_+)-\log M_0(iT')\}
 =\frac{\sigma_+L'}2+\frac{t\pi^2}{32}+e_1,
 \qquad |e_1|\le\frac{22}{x-20}.
\]
Here $|\Delta|<4$, the linear error is at most $12/x$, and
the quadratic error is at most $8/(x-6)$. Also
\[
 \left|\alpha(s_+)^2-(L/2+i\pi/4)^2\right|
 \le\frac{8\sqrt{L^2+\pi^2/4}}{x-20}
                         +\frac{64}{(x-20)^2}.
\]
Using $L'\ge L$ and $\sigma_+>0$ therefore gives
\[
 \frac{e^{t\pi^2/64}|M_0(iT')|}{|M_t((1+y)/2+iT)|}
 \le2\mathcal Q^{-(1+y)/4}e^{-tL^2/16}.
\]
In this calculation the three terms involving $t\pi^2$ cancel;
a bound for the remaining error in the exponent is
$(22+2t\sqrt{L^2+\pi^2/4})/(x-20)+16t/(x-20)^2<10^{-6}$.
Both real parts $\sigma=(1\pm y)/2$ lie in $[-3,4]$.
Thus both remainder integrals together contribute less than
$4\mathcal Q^{-1}<10^{-8}$. Combining them with the summand errors
proves \eqref{eq:approx}. Dominated convergence gives the endpoint $t=0$.

Finally, if $4\le y\le6$, the first sum differs from $1$ by at most
$\zeta(5/2)-1<0.345$. The reflected sum has modulus at most
\[
 1.01N^{-y}\sum_{n\le N}n^{(y-1)/2}
 \le1.01N^{(1-y)/2}\le1.01N^{-3/2}<10^{-6}.
\]
Together with \eqref{eq:approx} this is less than $0.345002<7/20$,
proving \eqref{eq:disk}.
\end{proof}

\section{The fixed-height estimate and comparison}
\begin{proposition}\label{prop:density}
For $x\ge X$ and $0\le t\le1/5$,
\[
 S(x,4,t)>\frac{21}{4}.
\]
\end{proposition}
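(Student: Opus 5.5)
The plan is to factor $H_t=B_t\,F_t$ with $F_t=H_t/B_t$, so that
\[
 S(x,4,t)=-\Ima\frac{B_t'}{B_t}(x+4i)-\Ima\frac{F_t'}{F_t}(x+4i).
\]
The first term, coming from the gamma-type normalizer, should be about $L/4$. The second term should be small and controlled by Lemma~\ref{lem:approx}. Since $L=\log(x/4\pi)\ge\log(X/4\pi)>22.29$, the main term exceeds $5.57$. This leaves a margin of about $0.32$ over $21/4$ to absorb everything else.

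\emph{Main term.} Since $w=(1-iz)/2$, we have $dw/dz=-i/2$, so
\[
 -\Ima\frac{B_t'}{B_t}(z)=\tfrac12\Rea\frac{M_t'}{M_t}(w).
\]
Next write
\[
 \frac{M_t'}{M_t}=\frac{M_0'}{M_0}+\frac t2\alpha\alpha'.
\]
Here $M_0'/M_0$ equals $\alpha$ up to the derivative of the explicit Stirling-type exponent, with error $O(1/x)$ by direct differentiation. Also $t|\alpha\alpha'|\le tL/(x-6)$ by the normalizer bounds. Combining these with $\Rea\alpha(w)\ge L/2$ gives
\[
 -\Ima\frac{B_t'}{B_t}(x+4i)\ge \frac L4-10^{-8}.
\]

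\emph{Correction term.} On the band $3\le y\le6$, write $F_t=G+E$ with
\[
 G=P_N(s_B,t)+\gamma P_N(s_A,t),\qquad |E|<10^{-6},
\]
by \eqref{eq:approx}. The disk of radius $1$ about $x+4i$ lies in this band. Cauchy's estimate therefore gives $|E'(x+4i)|<10^{-6}$, and it suffices to bound $|G'/G|$ at $y=4$.

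For the first sum, I combine the heat weights as in the proof of Lemma~\ref{lem:approx}. We have $\Rea s_B\ge 5/2+tL/4$ and $\log n\le L$, so
\[
 |e^{t\log^2 n/4}n^{-s_B}|\le n^{-5/2}.
\]
This yields $|P_N(s_B)|\ge 2-\zeta(5/2)>0.65$. Since
\[
 \frac{ds_B}{dz}=-\frac i2\Bigl(1+\frac t2\alpha'(w)\Bigr),
\]
we get
\[
 \bigl|\tfrac{d}{dz}P_N(s_B)\bigr|\le \tfrac12(1+10^{-10})\,|\zeta'(5/2)|<0.15.
\]
The reflected piece $\gamma P_N(s_A)$ is of size about $N^{-3/2}$, as in the last display of Lemma~\ref{lem:approx}. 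Its $z$-derivative gains at most a factor $O(L)$ from $\gamma'/\gamma$ and from $\log n$, so it remains below $10^{-4}$. Hence
\[
 \Bigl|\Ima\frac{F_t'}{F_t}\Bigr|\le\frac{0.15+10^{-4}}{0.65-10^{-5}}<0.24.
\]
Combining the two terms,
\[
 S(x,4,t)>5.57-0.24>21/4 .
\]

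\emph{Expected obstacle.} The delicate step is the derivative of the reflected sum and of the normalizer error terms. These involve $\gamma'$ and $d s_A/dz$, which carry factors of size $L$. I would bound them by the same integration of $\alpha+(t/2)\alpha\alpha'$ used for $|\gamma|$, which gives $|\gamma'|\le (L+1)|\gamma|$. I expect the Cauchy-estimate route to be needed only for the error $E$, since $G$ can be differentiated term by term. If the margin proves tight, I would replace the crude ratio $|\zeta'(5/2)|/(2-\zeta(5/2))$ by a sharper lower bound for $|P_N(s_B)|$, using the Euler product of the nearby $\zeta(s_B)$ together with the tail $\sum_{n>N}n^{-5/2}$.
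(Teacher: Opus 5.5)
\textbf{Gap: the Cauchy estimate for $E'$.} Your decomposition $S=-\Ima(B_t'/B_t)-\Ima(F_t'/F_t)$ and your bounds $|P-1|\le\zeta(5/2)-1$, $|P'|\le|s_B'|\sum_{n\ge2}(\log n)n^{-5/2}$, and $|R|,|R'|$ small follow the paper's route. One step fails as written. You apply Cauchy's estimate to $E=F_t-G$ with $G=P_N(s_B,t)+\gamma P_N(s_A,t)$ taken from \eqref{eq:approx}. In Lemma~\ref{lem:approx}, however, the cutoff $N=\lfloor\sqrt{\Rea z/(4\pi)+t/16}\rfloor$ varies with $\Rea z$, so $G$ and $E$ need not be holomorphic on the unit disk about $x+4i$. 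For some $x\ge X$, which the statement must cover, the floor jumps inside the disk. There $E$ is discontinuous, jumping by one summand of size roughly $N^{-2}$, and a sup bound $|E|<10^{-6}$ then says nothing about $E'$.

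The missing idea is the fixed cutoff. Freeze $N_0=N(x+4i)$ on the whole disk and define $P=P_{N_0}(s_B,t)$, $R=\gamma P_{N_0}(s_A,t)$ and $E=F_t-P-R$ with it, so all three are holomorphic. This $E$ differs from the remainder in \eqref{eq:approx} by at most one extra index $n\le N(z)+1$. That index still has $\log n\le L$, so its heat-weighted first summand is at most $n^{-2}$, and its reflected-to-first ratio stays below $1.02$. Hence $|E|<10^{-6}+3/60000^2<2\cdot10^{-6}$ on the disk, and only then does Cauchy give $|E'(x+4i)|<2\cdot10^{-6}$. Your term-by-term bounds for $P'$ and $R'$ must use the same frozen $N_0$.

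\textbf{Numerical slips (not fatal).} We have $\sum_{n\ge2}(\log n)n^{-5/2}=-\zeta'(5/2)\approx0.387$, so $\tfrac12|\zeta'(5/2)|\approx0.194$, not below $0.15$. With your denominator $0.65$ the quotient bound is therefore about $0.298$, not $0.24$. The paper certifies $0.297717$ from $J<39/100$ and $Z<269/200$. Also $L\approx22.2866$ at $x=X$, so $L>22.29$ is false; what holds is $L>22.28$, i.e.\ $L/4>5.57$. The conclusion still survives, since $5.57-10^{-6}-0.298>5.27>21/4$, but the margin is about $0.02$ rather than $0.33$. The paper uses the cruder $L/4>111/20$ and has a margin of only about $0.0023$. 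Finally, $M_0'/M_0=\alpha$ holds exactly by direct differentiation, so no Stirling-type error enters your main term.
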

\begin{proof}
Fix $z_0=x+4i$ and $t$, and set
$N_0=\lfloor\sqrt{x/(4\pi)+t/16}\rfloor$. On the closed unit disk
$|z-z_0|\le1$ put
\[
 F=H_t/B_t,\qquad P=P_{N_0}(s_B,t),\qquad
 R=\gamma P_{N_0}(s_A,t),\qquad E=F-P-R.
\]
All four functions are holomorphic on a neighbourhood of the disk.
Its points satisfy $\Rea z\ge X-1>6\cdot10^{10}$ and
$3\le\Ima z\le5$. The pointwise cutoff $N(z)$ in
Lemma~\ref{lem:approx} differs from $N_0$ by at most one, since
$\sqrt{\Rea z/(4\pi)+t/16}$ varies by less than one on the disk.
The possible extra index satisfies $n\ge60000$ and $n\le N(z)+1$.
The normalizer bounds in that lemma give a reflected-to-first ratio
below $1.02$ also at $n=N(z)+1$; use
$n\le\sqrt{\mathcal Q+1/80}+1$ and $3\le y\le5$.
Both heat weights still combine as before because $\log n\le L$.
Consequently
\begin{equation}\label{eq:frozen-error}
 |E(z)|<10^{-6}+\frac{3}{60000^2}<2\cdot10^{-6},
 \qquad |E'(z_0)|<2\cdot10^{-6}.
\end{equation}
The second inequality is Cauchy's estimate on the fixed-cutoff
remainder. At the centre, $N_0=N(z_0)$ and $|E(z_0)|<10^{-6}$.

Hereafter all quantities are evaluated at $z_0$. The elementary bounds
proved in Section~\ref{sec:checks} are
\[
 Z:=\sum_{n\ge1}n^{-5/2}<\frac{269}{200},\qquad
 J:=\sum_{n\ge2}(\log n)n^{-5/2}<\frac{39}{100}.
\]
Since $\Rea w=5/2$ and
$|s_B'|\le1/2+t/[4(x-6)]<1/2+10^{-10}$, the heat-weight estimate gives
\[
 |P-1|\le Z-1,\qquad |P'|\le(1/2+10^{-10})J.
\]
For the reflected term, the same estimates give
\[
 |R|\le1.01N_0^{-4}\sum_{n\le N_0}n^{3/2}
       \le1.01N_0^{-3/2}<10^{-6}.
\]
Write $\beta=\alpha+(t/2)\alpha\alpha'$. Then
$\gamma'/\gamma=(i/2)(\beta(v)+\beta(w))$ and
$|s_A'|\le1/2+t/[4(x-6)]$. Thus
\[
 |\gamma'/\gamma|+|s_A'|\log n\le L+2
       \qquad(1\le n\le N_0),
\]
using $|\alpha(s)|\le L/2+1$ and the preceding derivative bounds.
It follows, term by term, that
\[
 |R'|\le1.01(L+2)N_0^{-3/2}<2\cdot10^{-6}.
\]
To check uniformity in $x$, use $\mathcal Q<(N_0+1)^2$ and hence
$L+2<2\log N_0+3$. The function
$(2\log N+3)N^{-3/2}$ decreases for $N\ge60000$; with
$\log60000<12$ and $\sqrt{60000}>240$, its contribution is at most
$1.01\cdot27/(60000\cdot240)<2\cdot10^{-6}$.

Combining these inequalities yields
\begin{equation}\label{eq:F-logder}
 \left|\frac{F'}F\right|
 <\frac{(1/2+10^{-10})(39/100)+4\cdot10^{-6}}
          {2-269/200-2\cdot10^{-6}}
 <0.297717.
\end{equation}
Finally
\[
 -\Ima\frac{B_t'}{B_t}
 =\frac12\Rea\left(\alpha(w)+\frac t2\alpha(w)\alpha'(w)\right)
 \ge\frac14\log\frac{x}{4\pi}-10^{-6}.
\]
Section~\ref{sec:checks} gives $L/4>111/20$. Therefore
\[
 S(x,4,t)>\frac{111}{20}-10^{-6}
 -\frac{(1/2+10^{-10})(39/100)+4\cdot10^{-6}}
          {2-269/200-2\cdot10^{-6}}
 >\frac{21}{4}.
\]
This estimate includes the heat dependence for the full interval
$0\le t\le1/5$; it is not a calculation restricted to $t=0$.
\end{proof}

For reference, the preceding estimate gives the one-step bound
\begin{equation}\label{eq:old-bound}
 \Lambda\le\frac1{50}+\frac{988}{4675}\log\frac{73691}{34608}
 =0.1797275414\ldots
\end{equation}
under the seven-eighths hypothesis. We give the comparison argument
before its stepped refinement.
\begin{proof}[Derivation of \eqref{eq:old-bound}]
The functional equation converts the assumed half-plane to
$M(0)\le a=9/16$. At a simple highest zero $x+ih$, with $q=h^2$,
the paired-zero identity gives
\[
 q'\le-2-\frac{4q}{p}S(x,p,t)+\frac{8q}{p^2-q}
 \qquad(p^2\ge5q).
\]
For completeness, the kernel comparison behind this identity follows
by clearing positive denominators: with $D=d^2$, $A=h^2$, $B=p^2$,
$r=v^2\le A$, its numerator is a positive factor times
\[
 (B-A)\{D^2+(A+B-6r)D+(A-r)(B-r)\}\ge0.
\]
While the comparison height satisfies $q>b$, Lemma~\ref{lem:head}
places a contact zero outside the finite region, so symmetry permits
$x>X$. Proposition~\ref{prop:density}, with $p=4$, then gives
\[
 q'\le-2-\frac{21}{4}q+\frac{8q}{16-q}\le-2-kq,
 \qquad k=\frac{21}{4}-\frac8{16-a}=\frac{4675}{988}.
\]
Here the classical strip theorem ensures $q\le a$ throughout.
Use the barrier
\[
 Q(t)=(a+2/k)e^{-kt}-2/k.
\]
For $0<\e<1$ let $Q_\e'=-2-kQ_\e+\e$ and
$Q_\e(0)=a+\e$. Initially $Q_\e>a\ge M$, so contact is impossible
on some interval $[0,t_0]$. At a first simple contact while $Q_\e>b$,
every zero has height at most $\sqrt{Q_\e}$ and the classical strip
bound still gives $q\le a$. The kernel comparison therefore applies
and gives $q'\le-2-kq<Q_\e'$, contradicting contact from below.
For a multiple contact, the vertical splitting described in
Lemma~\ref{lem:head} supplies an earlier zero higher by $c\sqrt v$,
$c>0$, whereas $\sqrt{Q_\e(t-v)}=\sqrt{Q_\e(t)}+O(v)$.
This also contradicts first contact. Polymath's Theorem 1.5(i) puts
every nonreal zero at $|x|<\exp(C/t)$ for an absolute $C$, uniformly
on $t\in[t_0,1/5]$. Thus contact is attained in a compact set.
Continuous dependence of $Q_\e$ on $\e$ allows passage to
$\e\downarrow0$ up to the time $Q=b$.

The time at which $Q=b$ is
\[
 t_* =\frac1k\log\frac{2+ka}{2+kb}
      =\frac{988}{4675}\log\frac{73691}{34608}<\frac4{25}<\frac15.
\]
Comparison up to this time and continuity give $M(t_*)\le1/25$.
De Bruijn's strip theorem yields $\Lambda\le t_*+1/50<9/50$.
\end{proof}

\begin{remark}[The half-plane parameter and its admissible range]\label{rem:halfplane}
Assume more generally that the nontrivial zeros of $\zeta$ have
$\Rea s\le\sigma$, with $1/2\le\sigma\le1$. Set
\[
 a=(2\sigma-1)^2,\qquad b=1/25,\qquad
 k_4(a)=\frac{21}{4}-\frac8{16-a}.
\]
If $a\le b$, the classical strip theorem gives $\Lambda\le a/2$.
For $a>b$, define
\[
 T_4(a)=\frac1{k_4(a)}\log\frac{2+k_4(a)a}{2+k_4(a)b},
 \qquad F_4(\sigma)=\frac1{50}+T_4(a).
\]
The preceding proof gives $\Lambda\le F_4(\sigma)$ whenever
$T_4(a)\le1/5$. This time restriction is essential. Also
$4^2\ge5a$ throughout $0\le a\le1$, so the kernel condition
causes no further restriction.

The function $T_4$ increases strictly with $a>b$: its numerator
interval expands while $k_4(a)$ decreases, as is also clear from
$T_4(a)=\int_b^a(2+k_4(a)q)^{-1}\,dq$.
The unique thresholds satisfy
\[
 \begin{array}{lll}
 F_4(\sigma)=1/5:&0.90664<\sigma<0.90665,
       &\sigma\simeq0.9066425127,\\
 T_4((2\sigma-1)^2)=1/5:&0.93835<\sigma<0.93836,
       &\sigma\simeq0.9383580289.
 \end{array}
\]
Thus this formula improves the published $0.2$ below the first
threshold and remains justified up to the second. Beyond the
admissible range use the unconditional bound, not an extrapolation
of $F_4$. Where both apply one may take
$\Lambda\le\min\{1/5,F_4(\sigma)\}$.
\end{remark}

\section{Heat gain and an Euler-product estimate at height 301/100}
In this section fix
\[
 p=\frac{301}{100},\qquad r=\frac1{100},\qquad
 \sigma_p=\frac{1+p}{2}=\frac{401}{200},\qquad
 h_n=e^{t\log^2n/4}.
\]
The elementary exponential check in Section~\ref{sec:step-checks}
gives $L>22.28$ for $x\ge X$, hence $L/4>557/100$.

\begin{lemma}[Heat-weight gain]\label{lem:heat-gain}
For $x\ge X$, $0\le t\le1/5$, $y\ge3$, and $1\le n\le N$,
\[
 |h_nn^{-s_B}|\le n^{-(1+y)/2}
    \exp\left(-\frac t4\log n\,(L-\log n)\right)
 \le n^{-(1+y)/2-2.78t},
 \qquad |n^{-s_B}|\le n^{-(1+y)/2-5.57t}.
\]
\end{lemma}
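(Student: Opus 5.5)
The plan is to compute $|n^{-s_B}|$ exactly from the real part of $s_B$ and then combine it with the heat weight $h_n$. Since $s_B=w+\tfrac t2\alpha(w)$ with $\Rea w=(1+y)/2$, we have
\[
 |n^{-s_B}|=n^{-(1+y)/2}\exp\bigl(-\tfrac t2\log n\,\Rea\alpha(w)\bigr).
\]
The normalizer bound $\Rea\alpha(w)\ge L/2$ from Lemma~\ref{lem:approx} gives $|n^{-s_B}|\le n^{-(1+y)/2}\exp(-\tfrac t4 L\log n)$. That lemma states the bound for $-3\le\sigma\le4$. For $y>7$ I would recheck it directly: it follows from $\Rea\tfrac12\Log(w/2\pi)\ge\tfrac12\log(x/(4\pi))$, since $|w|\ge x/2$, together with $\Rea(1/(2w)+1/(w-1))>0$, since $\Rea w>1$. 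Multiplying by $h_n=\exp(\tfrac t4\log^2n)$ then gives exactly the first inequality
\[
 |h_nn^{-s_B}|\le n^{-(1+y)/2}\exp\bigl(-\tfrac t4\log n\,(L-\log n)\bigr).
\]

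For the second inequality, note that $n\le N$ and $N^2\le\mathcal Q+t/16\le\mathcal Q+1/80$ give $\log n\le\tfrac12\log(\mathcal Q+1/80)$. Hence
\[
 L-\log n\ge\tfrac12L-\tfrac12\log(1+1/(80\mathcal Q))\ge\tfrac L2-10^{-10}.
\]
Using $L>22.28$ from the check cited just before the lemma, this gives $L-\log n>11.12$, so $\tfrac t4\log n\,(L-\log n)\ge 2.78\,t\log n$. Exponentiating yields $n^{-(1+y)/2-2.78t}$. The same input bounds the bare term: $\tfrac t4 L\log n\ge\tfrac{22.28}{4}t\log n=5.57\,t\log n$, so $|n^{-s_B}|\le n^{-(1+y)/2-5.57t}$.

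The only delicate point is the margin: $11.14-11.12$ must absorb the correction coming from $N^2$ exceeding $\mathcal Q$ by at most $1/80$. This is negligible, of size about $\mathcal Q^{-1}$. The inequality $L>22.28$ itself rests on the exponential check in Section~\ref{sec:step-checks}, which I take as given.
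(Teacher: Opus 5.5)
Your proof is correct and is the paper's argument: it also uses $\Rea s_B\ge(1+y)/2+tL/4$ from $\Rea\alpha(w)\ge L/2$. It then derives $2\log n\le L+3\cdot10^{-12}$ from $N^2\le\mathcal Q+1/80$ and uses $L>22.28$ to get $(L-\log n)/4>2.78$ and $L/4>5.57$. Your direct check of $\Rea\alpha(w)\ge L/2$ is a useful addition, because the normalizer bounds in Lemma~\ref{lem:approx} are stated only for real parts in $[-3,4]$; the check works for every $y\ge3$, not only $y>7$, since then $\Rea w\ge2>1$.
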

\begin{proof}
Use $\Rea s_B\ge(1+y)/2+tL/4$ from the normalizer bounds.
Also $N^2\le\mathcal Q+1/80$, so
$2\log n\le L+3\cdot10^{-12}$ and
$(L-\log n)/4\ge L/8-10^{-12}>2.78$.
\end{proof}

Fix $z_0=x+ip$, $x\ge X$, and fix $t\in[0,1/5]$. On
$|z-z_0|\le r$ use the fixed cutoff $N_0=N(z_0)$ and put
\[
 F=H_t/B_t,\qquad P=P_{N_0}(s_B,t),\qquad
 R=\gamma P_{N_0}(s_A,t),\qquad E=F-P-R.
\]
The disk lies in $3\le\Ima z\le3.02$, $\Rea z>6\cdot10^{10}$.
The cutoff changes by at most one across it. As in
\eqref{eq:frozen-error}, the possible extra summands have total
modulus below $3/60000^2$: $\log n\le L$ still holds for
$n\le N(z)+1$, so each first-sum weight is at most $n^{-2}$,
and the reflected-to-first ratio is below $1.02$. Therefore
\[
 |E|<2\cdot10^{-6}\text{ on the disk},\qquad
 |E'(z_0)|<2\cdot10^{-4},\qquad |E(z_0)|<10^{-6}.
\]
At the centre, the normalizer estimates give, for $n\le N_0$,
\[
 |\gamma h_nn^{-s_A}|\le1.01N_0^{-p}n^{(p-1)/2},\qquad
 |\gamma'/\gamma|+|s_A'|\log n\le L+2.
\]
Consequently, using $L+2<2\log N_0+3$ and $N_0\ge60000$,
\begin{align*}
 |R|&\le1.01N_0^{(1-p)/2}<1.7\cdot10^{-5},\\
 |R'|&\le1.01(2\log N_0+3)N_0^{(1-p)/2}
       \le1.01\frac{2\log N_0+3}{N_0}<4.6\cdot10^{-4}.
\end{align*}
For the last inequality, $(2\log N+3)/N$ decreases for $N\ge60000$
and $\log60000<12$. We may thus use the uniform constants
\begin{equation}\label{eq:step-errors}
 \epsilon_0=2\cdot10^{-5}>|R|+|E(z_0)|,\qquad
 \epsilon_1=6.6\cdot10^{-4}>|R'|+|E'(z_0)|,\qquad
 \nu=\tfrac12+10^{-10}>|s_B'|.
\end{equation}
In addition, the normalizer has
$-\Ima(B_t'/B_t)\ge L/4-10^{-6}>557/100-10^{-6}$.

For $u>1$, write
\[
 Z(u)=\zeta(u),\qquad
 J(u)=-\zeta'(u)=\sum_{n\ge2}(\log n)n^{-u},\qquad
 \psi(u)=-\frac{\zeta'}{\zeta}(u)
       =\sum_{n\ge2}\Lambda_{\rm vM}(n)n^{-u},
\]
where $\Lambda_{\rm vM}$ denotes the von Mangoldt function.
These three functions are decreasing.

\begin{proposition}[Triangle estimate]\label{prop:triangle-step}
For $x\ge X$ and $0\le t^-\le t\le1/5$, put
$u=\sigma_p+2.78t^-$. Then
\begin{equation}\label{eq:triangle-step}
 S(x,p,t)>\frac{557}{100}-10^{-6}
             -\frac{\nu J(u)+\epsilon_1}{2-Z(u)-\epsilon_0}.
\end{equation}
\end{proposition}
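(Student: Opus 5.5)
The plan is to repeat the argument of Proposition~\ref{prop:density} at the lower centre $z_0=x+ip$. The only changes are that the Dirichlet-series sums are now bounded through the zeta values at the shifted abscissa $u=\sigma_p+2.78t^-$, and that the frozen-cutoff errors are the constants collected in \eqref{eq:step-errors}.

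First write $H_t=B_tF$. Then
\[
 S(x,p,t)=-\Ima\frac{B_t'}{B_t}(z_0)-\Ima\frac{F'}{F}(z_0)
 \ge\frac{557}{100}-10^{-6}-\left|\frac{F'}{F}(z_0)\right|,
\]
using the normalizer bound stated just before \eqref{eq:step-errors}. Next, on the disk we have $F=P+R+E$. At the centre this gives
\[
 |F|\ge|P|-\epsilon_0,\qquad |F'|\le|P'|+\epsilon_1.
\]
It therefore suffices to prove
\[
 |P(z_0)|\ge2-Z(u)\qquad\text{and}\qquad |P'(z_0)|\le\nu J(u).
\]

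For these two bounds, apply Lemma~\ref{lem:heat-gain} with $y=p$. For $1\le n\le N_0$ it gives $|h_nn^{-s_B}|\le n^{-\sigma_p-2.78t}$. Since $t^-\le t$ and $n\ge1$, this is at most $n^{-u}$. Then
\[
 |P-1|\le\sum_{2\le n\le N_0}n^{-u}\le Z(u)-1,
\]
which yields $|P|\ge2-Z(u)$. For the derivative, $P'=-s_B'\sum_n(\log n)h_nn^{-s_B}$. The heat weights $h_n$ are independent of $z$, and we have $|s_B'|<\nu$ from \eqref{eq:step-errors}. Hence $|P'|\le\nu\sum_{n\ge2}(\log n)n^{-u}=\nu J(u)$. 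Because $u\ge\sigma_p>2$, both series converge.

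It remains to check that the denominator $2-Z(u)-\epsilon_0$ is positive, so that the quotient bound is legitimate. Since $Z$ is decreasing, it is enough to check this at $u=\sigma_p=401/200$. There $\zeta(401/200)\approx1.64$, so the denominator is roughly $0.36$. I would state this as an elementary check deferred to Section~\ref{sec:step-checks}.

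The main point needing care is the uniformity of the error constants. The bounds $\epsilon_0$ and $\epsilon_1$ are already justified in the text preceding the proposition. The derivative bound $|E'(z_0)|<2\cdot10^{-4}$ comes from Cauchy's estimate on the radius-$r$ disk with the frozen cutoff. I would verify once more that the extra index that may appear across the disk is covered by that estimate. Everything else is monotonicity of $Z$ and $J$ in $u$, which is immediate from their Dirichlet series.
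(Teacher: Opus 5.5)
Your proposal is correct and follows the paper's proof: you use Lemma~\ref{lem:heat-gain} with $t\ge t^-$ to get $|P-1|\le Z(u)-1$ and $|P'|\le\nu J(u)$, absorb $R$ and $E$ through $\epsilon_0,\epsilon_1$, secure the denominator via $Z(u)\le Z(\sigma_p)<1.641$, and finish with the normalizer bound $-\Ima(B_t'/B_t)\ge L/4-10^{-6}$ (stated just after \eqref{eq:step-errors}). The strict inequality in the conclusion comes from $L/4>557/100$ and the strict error bounds, so your intermediate $\ge$ upgrades to $>$.
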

\begin{proof}
Lemma~\ref{lem:heat-gain} gives $|P-1|\le Z(u)-1$ and
$|P'|\le\nu J(u)$. The denominator is positive because
$Z(u)\le Z(\sigma_p)<1.641$, as certified below. Thus
$|F'/F|\le(\nu J(u)+\epsilon_1)/(2-Z(u)-\epsilon_0)$.
Combine this with the normalizer bound.
\end{proof}

\begin{proposition}[Euler-product perturbation estimate]\label{prop:euler-step}
Let $0\le t^-\le t\le t^+\le1/5$ and $x\ge X$. Define
$u=\sigma_p+5.57t^-$, $m=1/Z(u)$, $K_0=65536$, and
\begin{align}
 \delta_0&=\sum_{n=2}^{K_0}
    (e^{t^+\log^2n/4}-1)n^{-u}+4\cdot10^{-5},\label{eq:delta0}\\
 \delta_1&=\sum_{n=2}^{K_0}
    (e^{t^+\log^2n/4}-1)(\log n)n^{-u}+5\cdot10^{-4}.
       \label{eq:delta1}
\end{align}
If $m>\delta_0+\epsilon_0$, then
\begin{equation}\label{eq:euler-step}
 S(x,p,t)>\frac{557}{100}-10^{-6}
    -\frac{\nu(\psi(u)m+\delta_1)+\epsilon_1}
                 {m-\delta_0-\epsilon_0}.
\end{equation}
\end{proposition}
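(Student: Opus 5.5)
We follow the proof of Proposition~\ref{prop:triangle-step}, keeping the decomposition $F=P+R+E$ at $z_0=x+ip$ and the bounds \eqref{eq:step-errors}. The difference is that $P$ is compared multiplicatively with a zeta value, which beats the triangle inequality $|P|\ge 2-Z(u)$. So $F'/F$ will be bounded through $|P|$ from below and $|P'|$ from above, both measured against $\zeta(s_B)$. Write $s=s_B(z_0)$. Lemma~\ref{lem:heat-gain} gives
\[
 \Rea s\ge\sigma_p+5.57t\ge u .
\]
Split $P=\sum_{n\le N_0}n^{-s}+\sum_{n\le N_0}(h_n-1)n^{-s}$. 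The first piece is $\zeta(s)$ minus the tail $\sum_{n>N_0}n^{-s}$. This tail is at most $\int_{N_0}^\infty y^{-\sigma_p}\,dy\le N_0^{1-\sigma_p}/(\sigma_p-1)$, which is about $60000^{-1}<2\cdot10^{-5}$ in modulus. Use the same heat gain for $n\ge2$: the weight $h_n$ has $0\le h_n-1\le e^{t^+\log^2 n/4}-1$, and $|n^{-s}|\le n^{-u}$. Hence
\[
 |P-\zeta(s)|\le\sum_{n=2}^{K_0}(e^{t^+\log^2n/4}-1)n^{-u}+\text{tail}_{n>K_0}+\text{tail}_{n>N_0}.
\]

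For the range $K_0<n\le N_0$ we would use the cruder bound $|h_nn^{-s}|\le n^{-\sigma_p-2.78t}$ from Lemma~\ref{lem:heat-gain}. In fact we would simply bound $|(h_n-1)n^{-s}|\le|h_nn^{-s}|+|n^{-s}|\le 2n^{-\sigma_p}$. The sum of this over $n>65536$ is $\le 2\cdot65536^{-1.005}/1.005\approx 3\cdot10^{-5}$. This is the source of the constant $4\cdot10^{-5}$ in \eqref{eq:delta0}, together with the $N_0$-tail; checking this arithmetic is routine. The lower bound then comes from the Euler product. For $\Rea s\ge u>1$,
\[
 |\zeta(s)|\ge\prod_p(1+p^{-\Rea s})^{-1}\ge\prod_p(1-p^{-u})\cdot\frac{1}{\ldots},
\]
and more simply $|\zeta(s)|\ge\zeta(2u)/\zeta(u)$. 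We would instead use $|1/\zeta(s)|=|\sum\mu(n)n^{-s}|\le\zeta(\Rea s)\le Z(u)$, so $|\zeta(s)|\ge m$. This is the intended $m$. Therefore $|P|\ge m-\delta_0$, and hence $|F|\ge m-\delta_0-\epsilon_0>0$ by hypothesis.

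For the numerator, differentiate: $P'=s_B'\sum_{n\le N_0}(-\log n)h_nn^{-s}$. By the same splitting,
\[
 |P'|\le\nu\Bigl(|\zeta'(s)|+\sum_{n=2}^{K_0}(e^{t^+\log^2n/4}-1)(\log n)n^{-u}+\text{tails}\Bigr).
\]
The tails carry an extra $\log n\le 23$ factor, giving the $5\cdot10^{-4}$ in \eqref{eq:delta1} since $23\cdot(2\cdot10^{-5})$ is below it. The key point is to write $|\zeta'(s)|=|\zeta(s)|\,|(\zeta'/\zeta)(s)|\le|\zeta(s)|\psi(u)$. Here the Dirichlet series of $\zeta'/\zeta$ has nonnegative coefficients, so $|(\zeta'/\zeta)(s)|\le\psi(\Rea s)\le\psi(u)$ by monotonicity. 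This is not $\le\psi(u)m$ directly, so we must instead bound the ratio.

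Write $P=\zeta(s)+\eta_0$ and $P'=s_B'(\zeta'(s)+\eta_1)$, with $|\eta_0|\le\delta_0$ and $|\eta_1|\le\delta_1$. Set $Y=|\zeta(s)|\ge m$. Then
\[
 \left|\frac{F'}{F}\right|\le\frac{\nu(\psi(u)Y+\delta_1)+\epsilon_1}{Y-\delta_0-\epsilon_0}.
\]
As a function of $Y$ on $Y>\delta_0+\epsilon_0$, this is decreasing, because $\psi(u)(\delta_0+\epsilon_0)\ge0$ while the other constants are positive. Indeed, the derivative has numerator $-\nu\psi(u)(\delta_0+\epsilon_0)-\nu\delta_1-\epsilon_1<0$. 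So its maximum is at $Y=m$, which yields exactly the fraction in \eqref{eq:euler-step}. Finally, combine $S=-\Ima(B_t'/B_t)-\Ima(F'/F)\ge L/4-10^{-6}-|F'/F|$ with $L/4>557/100$. The main obstacle I expect is the tail constants: confirming that the $n>K_0$ and $n>N_0$ contributions, including the reflected-ratio and cutoff-change remarks, fit in $4\cdot10^{-5}$ and $5\cdot10^{-4}$ uniformly in $x\ge X$ and $t\le t^+$. This needs care, since $h_n-1$ is not small for large $n$, and the crude bound $2n^{-\sigma_p}$ is what is used there.
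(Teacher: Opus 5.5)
Your architecture is the paper's. You write $P=\zeta(s)-\sum_{n>N_0}n^{-s}+D_0$ and the analogous identity for $P'$, and bound the heat perturbation by the finite sums up to $K_0$ plus tails. You then use the Euler product (equivalently the M\"obius series) to get $|1/\zeta(s)|\le Z(u)$ and $|\zeta'/\zeta(s)|\le\psi(u)$, and conclude because the quotient decreases in $|\zeta(s)|\ge m$. Those steps, including your derivative computation for the monotonicity, are correct. The gap is in the tail step. That step is where the explicit constants $4\cdot10^{-5}$ and $5\cdot10^{-4}$ have to be earned, and your accounting there does not close.

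\emph{The bound $\log n\le 23$.} It is false on the tails. The zeta tail runs over all $n>N_0$, and $N_0$ itself is unbounded, since $x\ge X$ has no upper limit. Even $\log n\le\log N_0$ is not uniform in $x$. The weighted tails need their own integral tests: $\int_{N_0}^\infty(\log y)y^{-\sigma_p}\,dy<2.1\cdot10^{-4}$ and $\int_{K_0}^\infty(\log y)y^{-\sigma_p}\,dy<1.9\cdot10^{-4}$, using $N_0\ge60000$ and that the integrand decreases.

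\emph{The crude bound.} Using $|(h_n-1)n^{-s}|\le 2n^{-\sigma_p}$ and then adding the zeta tail on top overshoots both allowances:
\begin{itemize}
\item the unweighted tails are roughly $2(1.44\cdot10^{-5})+1.57\cdot10^{-5}\approx4.4\cdot10^{-5}>4\cdot10^{-5}$;
\item the weighted tails are roughly $2(1.74\cdot10^{-4})+1.88\cdot10^{-4}\approx5.4\cdot10^{-4}>5\cdot10^{-4}$.
\end{itemize}
Your own figures, $3\cdot10^{-5}$ plus an $N_0$-tail near $1.6\cdot10^{-5}$, already exceed $4\cdot10^{-5}$.

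\emph{The fix.} The missing observation is that $h_n\ge1$, so $0\le h_n-1\le h_n$. Lemma~\ref{lem:heat-gain} then gives $|(h_n-1)n^{-s}|\le|h_nn^{-s}|\le n^{-\sigma_p}$ for $K_0<n\le N_0$. The four tails become $<1.7\cdot10^{-5}$ and $<1.6\cdot10^{-5}$ (unweighted), and $<2.1\cdot10^{-4}$ and $<1.9\cdot10^{-4}$ (weighted). These fit inside $4\cdot10^{-5}$ and $5\cdot10^{-4}$ whether $N_0\le K_0$ or $N_0>K_0$.

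Alternatively, keep your factor $2$ but note that $(K_0,N_0]$ and $(N_0,\infty)$ are disjoint. The combined tail is then at most $2\sum_{n>K_0}$, which also fits.

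Finally, the reflected-ratio and cutoff-change effects are already absorbed in $\epsilon_0$ and $\epsilon_1$. They play no role in $\delta_0$ or $\delta_1$.
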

\begin{proof}
Set $s=s_B(z_0)$, with $\Rea s\ge\sigma_p+5.57t\ge u$, and
keep $N_0,t$ fixed when differentiating in $z$. Then
\begin{align*}
 P&=\zeta(s)-\sum_{n>N_0}n^{-s}+D_0,
 &D_0&=\sum_{n\le N_0}(h_n-1)n^{-s},\\
 P'&=-s_B'\left(-\zeta'(s)-\sum_{n>N_0}(\log n)n^{-s}+D_1\right),
 &D_1&=\sum_{n\le N_0}(h_n-1)(\log n)n^{-s}.
\end{align*}
The integral test, $\Rea s\ge\sigma_p>2$, and $N_0\ge60000$ give
\[
 \sum_{n>N_0}n^{-\Rea s}<1.7\cdot10^{-5},\qquad
 \sum_{n>N_0}(\log n)n^{-\Rea s}<2.1\cdot10^{-4}.
\]
In $D_j$, the terms with $n\le K_0$ are bounded by the finite sums
in \eqref{eq:delta0}--\eqref{eq:delta1}. For $K_0<n\le N_0$,
\[
 |(h_n-1)n^{-s}|\le|h_nn^{-s}|\le n^{-\sigma_p}.
\]
The corresponding tails above $K_0$ are below $1.6\cdot10^{-5}$
and $1.9\cdot10^{-4}$, respectively. The allowances in the
proposition therefore cover both the zeta tails and the perturbation
tails, whether $N_0\le K_0$ or $N_0>K_0$.

The absolutely convergent Euler product in $\Rea s>1$ gives
\[
 \left|\frac{\zeta'(s)}{\zeta(s)}\right|\le\psi(u),\qquad
 |1/\zeta(s)|\le Z(u),\qquad \rho:=|\zeta(s)|\ge m.
\]
Hence
\[
 \left|\frac{F'}F\right|
 \le\frac{\nu(\psi(u)\rho+\delta_1)+\epsilon_1}
                 {\rho-\delta_0-\epsilon_0}.
\]
This quotient decreases in $\rho>\delta_0+\epsilon_0$, so replacing
$\rho$ by $m$ gives \eqref{eq:euler-step}. The Euler product is used
only for $\zeta$; the heat weights themselves are not multiplicative.
\end{proof}

\section{Stepped strip comparison}\label{sec:stepped}
The next statement applies to an arbitrary initial strip, so it also
gives a finite procedure for converting a half-plane hypothesis
$\Rea\rho\le\sigma$ into a bound: start with $q_0=(2\sigma-1)^2$.

\begin{theorem}\label{thm:step}
Suppose $M(0)\le q_0$ and
$0=t_0<\cdots<t_n\le1/5$. Write $d_i=t_{i+1}-t_i$.
Let $q_0>q_1>\cdots>q_n\ge b=1/25$ and fix $p>0$.
Suppose, for $0\le i<n$, that
\[
 S(x,p,t)\ge C_i\quad(x\ge X,\ t_i\le t\le t_{i+1}),\qquad
 p^2\ge5q_i,\qquad
 k_i=\frac{4C_i}{p}-\frac8{p^2-q_i}>0,
\]
and
\begin{equation}\label{eq:step-gate}
 e^{k_id_i}(2+k_iq_{i+1})\ge2+k_iq_i.
\end{equation}
Then $M(t_n)\le q_n$ and $\Lambda\le t_n+q_n/2$.
\end{theorem}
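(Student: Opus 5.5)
The proof is an induction on $i$, establishing $M(t_i)\le q_i$ for $i=0,\dots,n$. Each step repeats the barrier argument from the derivation of \eqref{eq:old-bound} on the interval $[t_i,t_{i+1}]$, with $a$ replaced by $q_i$, $b$ by $q_{i+1}$, and $k$ by $k_i$. The base case $M(t_0)\le q_0$ is the hypothesis. The final clause follows from $M(t_n)\le q_n$ and de Bruijn's strip theorem applied from time $t_n$: a strip of squared half-width $q_n$ at time $t_n$ gives $\Lambda\le t_n+q_n/2$.

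\textbf{Inductive step.} Assume $M(t_i)\le q_i$. On $[t_i,t_{i+1}]$ define the barrier
\[
 Q(t)=(q_i+2/k_i)e^{-k_i(t-t_i)}-2/k_i ,
\]
which solves $Q'=-2-k_iQ$ with $Q(t_i)=q_i$. Condition \eqref{eq:step-gate} is exactly $Q(t_{i+1})\le q_{i+1}$. For $0<\e<1$ use the perturbed barrier $Q_\e$, with $Q_\e'=-2-k_iQ_\e+\e$ and $Q_\e(t_i)=q_i+\e$.

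\textbf{Ruling out contact.} Suppose a first contact occurs at time $t\in(t_i,t_{i+1}]$ with $q=Q_\e(t)>b$.
\begin{itemize}
\item The classical strip theorem (heights nonincreasing) gives $M(t)\le M(t_i)\le q_i$. Hence every zero has squared height at most $q_i\le p^2/5$, so the kernel comparison inequality
\[
 q'\le-2-\frac{4q}{p}S+\frac{8q}{p^2-q}
\]
applies.
\item Because $q>b$, Lemma~\ref{lem:head} shows the contact zero has $|x|>X$; by symmetry we may take $x>X$.
\item The hypothesis gives $S(x,p,t)\ge C_i$. Since $q\le q_i$ gives $8q/(p^2-q)\le 8q/(p^2-q_i)$, we get $q'\le-2-k_iq<Q_\e'$, contradicting contact from below.
\end{itemize}
Multiple contacts are excluded by backward Hermite splitting, exactly as before. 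Attainment of the first contact uses Polymath's Theorem 1.5(i), which confines nonreal zeros to a compact $x$-range on $t\ge t_i$; at $t_i=0$ this needs the short-time argument already given there, since $Q_\e>M$ near $t_i$. Letting $\e\downarrow0$ gives $M(t)\le Q(t)$ on $[t_i,t_{i+1}]$ for as long as $Q>b$.

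\textbf{Main obstacle: the case $Q$ reaches $b$ before $t_{i+1}$.} The comparison is then only valid up to the hitting time $\tau$, since Lemma~\ref{lem:head} is needed to place contact zeros outside the finite region. I would handle it as follows:
\begin{itemize}
\item At $\tau$ we have $M(\tau)\le b\le q_{i+1}$.
\item Monotonicity of $M$ (strip theorem) carries this forward: $M(t_{i+1})\le M(\tau)\le q_{i+1}$.
\item If $Q$ stays above $b$ on the whole interval, then directly $M(t_{i+1})\le Q(t_{i+1})\le q_{i+1}$ by \eqref{eq:step-gate}.
\end{itemize}
One further point needs checking: \eqref{eq:step-gate} is only an inequality, so $Q$ may dip below $q_{i+1}$ before $t_{i+1}$. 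This is harmless, because the argument only uses $M(t_{i+1})\le\max(Q(t_{i+1}),b)\le q_{i+1}$. Finally, $k_i>0$ ensures $Q$ is decreasing and well defined on the interval.
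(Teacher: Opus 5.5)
Your proposal is correct and follows essentially the same route as the paper. The paper's proof is a terser version of the same argument: induction on $i$ with the barrier $Q_i(t)=(q_i+2/k_i)e^{-k_i(t-t_i)}-2/k_i$, strict perturbed barriers with upward separation at $t_i$, and Lemma~\ref{lem:head} to place the contact zero at $x>X$. It then uses the strip theorem to bound contact heights by $q_i$, Hermite splitting for multiple contacts, the strip theorem again when $Q_i$ reaches $b$ early, and the observation that \eqref{eq:step-gate} is exactly $Q_i(t_{i+1})\le q_{i+1}$.
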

\begin{proof}
Induct on $i$. Given $M(t_i)\le q_i$, the classical strip theorem
ensures $M(t)\le q_i$ for $t\ge t_i$. At a highest simple zero
with squared height $q>b$, Lemma~\ref{lem:head} places the zero at
$|x|>X$, and the paired-zero comparison used above gives
\[
 q'\le-2-\frac{4C_i}{p}q+\frac{8q}{p^2-q}
       \le-2-k_iq.
\]
Compare with $Q_i(t)=(q_i+2/k_i)e^{-k_i(t-t_i)}-2/k_i$ while
$Q_i>b$. The strict perturbed barriers, compact attainment at positive
time, and backward Hermite splitting used in the proof of
\eqref{eq:old-bound} justify this comparison also at multiple zeros.
At the initial time of a step, a small upward perturbation supplies
strict separation; the strip theorem bounds contact heights by $q_i$.
If $Q_i$ reaches $b$ before $t_{i+1}$, then $M\le b$ there and
remains at most $b$ by the strip theorem. Otherwise
\eqref{eq:step-gate} is exactly $Q_i(t_{i+1})\le q_{i+1}$.
This proves the induction. Apply the strip theorem once more at $t_n$.
\end{proof}

Both tables below use $p=301/100$, $q_0=9/16$, and $q_n=1/25$.
The constants $C_i$ in Table~\ref{tab:tri-steps} follow from
Proposition~\ref{prop:triangle-step}, with $t^-=t_i$; those in
Table~\ref{tab:eul-steps} follow from Proposition~\ref{prop:euler-step},
with $t^-=t_i$, $t^+=t_{i+1}$. Section~\ref{sec:step-checks}
supplies finite upper bounds for every scalar quantity involved.

\begin{table}[htbp]
\centering\small
\caption{Triangle estimate: final time $3793/25000$, giving $\Lambda\le0.17172$.}\label{tab:tri-steps}
\begin{tabular}{rccccc}
\toprule
$i$ & $t_i$ & $10^6q_i$ & $10^4C_i$ & $d_i$ & $10^6q_{i+1}$\\
\midrule
0 & $0$ & 562500 & 42786 & $\frac{1}{100}$ & 516903\\
1 & $\frac{1}{100}$ & 516903 & 44307 & $\frac{1}{100}$ & 472419\\
2 & $\frac{1}{50}$ & 472419 & 45557 & $\frac{1}{100}$ & 429333\\
3 & $\frac{3}{100}$ & 429333 & 46600 & $\frac{1}{100}$ & 387828\\
4 & $\frac{1}{25}$ & 387828 & 47483 & $\frac{1}{100}$ & 348016\\
5 & $\frac{1}{20}$ & 348016 & 48237 & $\frac{1}{100}$ & 309958\\
6 & $\frac{3}{50}$ & 309958 & 48888 & $\frac{1}{100}$ & 273677\\
7 & $\frac{7}{100}$ & 273677 & 49455 & $\frac{1}{100}$ & 239169\\
8 & $\frac{2}{25}$ & 239169 & 49952 & $\frac{1}{100}$ & 206410\\
9 & $\frac{9}{100}$ & 206410 & 50391 & $\frac{1}{100}$ & 175361\\
10 & $\frac{1}{10}$ & 175361 & 50780 & $\frac{1}{100}$ & 145974\\
11 & $\frac{11}{100}$ & 145974 & 51127 & $\frac{1}{100}$ & 118194\\
12 & $\frac{3}{25}$ & 118194 & 51439 & $\frac{1}{100}$ & 91960\\
13 & $\frac{13}{100}$ & 91960 & 51720 & $\frac{1}{100}$ & 67208\\
14 & $\frac{7}{50}$ & 67208 & 51974 & $\frac{1}{100}$ & 43874\\
15 & $\frac{3}{20}$ & 43874 & 52204 & $\frac{43}{25000}$ & 40000\\
\bottomrule
\end{tabular}
\end{table}

\begin{table}[htbp]
\centering\small
\caption{Euler-product estimate: final time $1801/12500$, giving $\Lambda\le0.16408$.}\label{tab:eul-steps}
\begin{tabular}{rccccc}
\toprule
$i$ & $t_i$ & $10^6q_i$ & $10^4C_i$ & $d_i$ & $10^6q_{i+1}$\\
\midrule
0 & $0$ & 562500 & 52708 & $\frac{1}{100}$ & 510004\\
1 & $\frac{1}{100}$ & 510004 & 52851 & $\frac{1}{100}$ & 460480\\
2 & $\frac{1}{50}$ & 460480 & 53006 & $\frac{1}{100}$ & 413770\\
3 & $\frac{3}{100}$ & 413770 & 53166 & $\frac{1}{100}$ & 369731\\
4 & $\frac{1}{25}$ & 369731 & 53323 & $\frac{1}{100}$ & 328228\\
5 & $\frac{1}{20}$ & 328228 & 53475 & $\frac{1}{100}$ & 289133\\
6 & $\frac{3}{50}$ & 289133 & 53619 & $\frac{1}{100}$ & 252324\\
7 & $\frac{7}{100}$ & 252324 & 53754 & $\frac{1}{100}$ & 217683\\
8 & $\frac{2}{25}$ & 217683 & 53881 & $\frac{1}{100}$ & 185096\\
9 & $\frac{9}{100}$ & 185096 & 53999 & $\frac{1}{100}$ & 154454\\
10 & $\frac{1}{10}$ & 154454 & 54108 & $\frac{1}{100}$ & 125652\\
11 & $\frac{11}{100}$ & 125652 & 54209 & $\frac{1}{100}$ & 98588\\
12 & $\frac{3}{25}$ & 98588 & 54303 & $\frac{1}{100}$ & 73166\\
13 & $\frac{13}{100}$ & 73166 & 54390 & $\frac{1}{100}$ & 49294\\
14 & $\frac{7}{50}$ & 49294 & 54470 & $\frac{51}{12500}$ & 40000\\
\bottomrule
\end{tabular}
\end{table}

\begin{proof}[Derivation of the two intermediate stepped bounds]
The half-plane assumption and the functional equation give
$M(0)\le9/16$. Every row of Table~\ref{tab:eul-steps} satisfies the
hypotheses of Theorem~\ref{thm:step}, by the certificates below.
The final time is $t_{15}=7/50+51/12500=1801/12500$, and
$q_{15}=1/25$. Therefore
\[
 \Lambda\le\frac{1801}{12500}+\frac1{50}
          =\frac{2051}{12500}=0.16408.
\]
The same reasoning applied to Table~\ref{tab:tri-steps} gives
$\Lambda\le3793/25000+1/50=4293/25000=0.17172$.
\end{proof}

\section{A kernel comparison using the vertical derivative}\label{sec:kernel}
The preceding argument uses only a lower bound for $S$. An upper
bound for $\partial_pS$ permits a stronger pairwise inequality.
In this section, $K_Y(d,v)$ denotes the conjugate-pair kernel in the
proof of Lemma~\ref{lem:head}.

\begin{lemma}\label{lem:kernel-derivative}
Let $p>0$, $B=p^2$, $0<A_2<B$, and $\lambda,\mu\ge0$. For
$D\ge0$ and $0\le r\le A_2$ put
\begin{align}
 w&=D+B-r,\qquad \Delta=w^2+4Dr,\notag\\
 \mathcal N&=2\lambda w\Delta-\mu(2w+4B)\Delta+8\mu Bw^2,\notag\\
 \Phi(D,r)&=2\Delta^2-(D+A_2+3r)\mathcal N.\label{eq:kernel-poly}
\end{align}
If $\Phi(D,r)\ge0$ throughout that domain, then, for
$0<h^2\le A_2$, $0\le v\le h$, and $(d,v)\ne(0,h)$,
\begin{equation}\label{eq:kernel-derivative}
 K_h(d,v)\ge h\left(\frac\lambda p K_p(d,v)
                              -\mu\partial_pK_p(d,v)\right).
\end{equation}
\end{lemma}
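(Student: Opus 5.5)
We will reduce \eqref{eq:kernel-derivative} to the polynomial inequality $\Phi\ge0$ by writing both sides in the variables $D=d^2$, $r=v^2$, $A=h^2$, $B=p^2$. Then we clear the positive denominators and use one elementary comparison between the $h$-denominator and a linear factor.

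\emph{Step 1: closed forms.} Multiplying out,
\[
 [D+(p-v)^2][D+(p+v)^2]=(D+B+r)^2-4Br=(D+B-r)^2+4Dr=w^2+4Dr=\Delta .
\]
The middle equality follows by expanding $(w+2r)^2-4Br$. By the product form of $K_Y$ in the proof of Lemma~\ref{lem:head}, $K_p=2pw/\Delta$, so $(\lambda/p)K_p=2\lambda w/\Delta$.

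Next we differentiate in $p$ with $d,v$ fixed. Since $\partial_pw=2p$ and $\partial_p\Delta=4pw$,
\[
 \partial_pK_p=\frac{2w+4B}{\Delta}-\frac{8Bw^2}{\Delta^2}
 =\frac{(2w+4B)\Delta-8Bw^2}{\Delta^2}.
\]
Hence the right side of \eqref{eq:kernel-derivative} is exactly $h\mathcal N/\Delta^2$. This explains the definition of $\mathcal N$.

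The same computation at height $h$ gives
\[
 K_h=\frac{2h(D+A-r)}{\Delta_h},\qquad \Delta_h=(D+A-r)^2+4Dr .
\]
Because $r\le A$, both $D+A-r\ge0$ and $\Delta_h\ge0$. They vanish only when $D=0$ and $r=A$, which is the excluded point $(d,v)=(0,h)$. Away from it, $\Delta_h>0$ and $D+A-r>0$. Also $\Delta>0$, since $w\ge B-A_2>0$.

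\emph{Step 2: reduction.} After multiplying by $\Delta^2\Delta_h/h>0$, the inequality to prove is
\[
 2(D+A-r)\Delta^2\ \ge\ \mathcal N\,\Delta_h .
\]
If $\mathcal N\le0$ this is immediate, since the left side is nonnegative. Otherwise we use the key bound
\[
 \Delta_h\le (D+A-r)(D+A+3r).
\]
Dividing by $D+A-r>0$, this is equivalent to $4Dr/(D+A-r)\le4r$, that is, to $D\le D+A-r$, which holds because $r\le A$.

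Since $A\le A_2$, we obtain
\[
 \mathcal N\Delta_h\le(D+A-r)(D+A_2+3r)\mathcal N .
\]
The hypothesis $\Phi(D,r)\ge0$ says precisely that $(D+A_2+3r)\mathcal N\le2\Delta^2$. Here $(D,r)$ is in the stated domain, because $0\le r=v^2\le h^2\le A_2$ and $D\ge0$. Multiplying by $D+A-r>0$ finishes the proof.

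The only point needing care is the main obstacle: the $h$-dependence must be removed so that a single polynomial $\Phi$, independent of $h$, suffices. This is achieved by the factorization $\Delta_h=(D+A-r)^2+4Dr$ together with the linear majorant $D+A_2+3r$. This majorant is where the precise coefficient $3r$ in \eqref{eq:kernel-poly} comes from. The rest is direct differentiation and bookkeeping of signs.
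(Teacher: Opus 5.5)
Your proof is correct and is essentially the paper's argument. Both compute the closed forms $K_p=2pw/\Delta$ and $\partial_pK_p=[(2w+4B)\Delta-8Bw^2]/\Delta^2$, use the bound $K_h/h\ge 2/(D+A+3r)\ge 2/(D+A_2+3r)$ (which you write with denominators cleared as $\Delta_h\le(D+A-r)(D+A+3r)$), and then finish with $\Phi\ge0$, treating the case $\mathcal N\le0$ separately.
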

\begin{proof}
Set $D=d^2$, $r=v^2$. Direct differentiation gives
\[
 K_p=\frac{2pw}{\Delta},\qquad
 \partial_pK_p=\frac{(2w+4B)\Delta-8Bw^2}{\Delta^2}.
\]
Thus the bracket in \eqref{eq:kernel-derivative} is
$\mathcal N/\Delta^2$. Both $w$ and $\Delta$ are positive.
With $A=h^2$ and $v_0=D+A-r$, we have $v_0\ge D$, $v_0>0$, and
\[
 \frac{K_h}{h}=\frac{2v_0}{v_0^2+4Dr}
 \ge\frac2{v_0+4r}=\frac2{D+A+3r}
 \ge\frac2{D+A_2+3r}.
\]
If $\mathcal N\le0$, the result follows immediately. Otherwise,
$\Phi\ge0$ gives $2/(D+A_2+3r)\ge\mathcal N/\Delta^2$.
\end{proof}

For $\lambda=1$, $\mu=0$, the polynomial becomes
$2\Delta\{w(B-A_2-4r)+4Dr\}$, which is nonnegative if $B\ge5A_2$.
Thus the lemma includes the earlier comparison. The new table uses
$1.718\le\lambda\le1.920$, with the derivative term paying for the
larger coefficient of $K_p$.

\subsection{Two density estimates at height \texorpdfstring{$61/20$}{61/20}}\label{sec:two-density}
In this subsection fix
\[
 p=\frac{61}{20},\quad r_0=\frac1{20},\quad
 \sigma_p=\frac{1+p}{2}=\frac{81}{40},\quad
 N_*=69098,\quad N_1=65536.
\]
Section~\ref{sec:kernel-checks} certifies $L/4>5.5716$ and $N\ge N_*$
for $x\ge X$. The normalizer formulas and
Lemma~\ref{lem:heat-gain} also give, for $n\le N$ and $3\le y\le6$,
\begin{equation}\label{eq:reflected-heat}
 |\gamma h_nn^{-s_A}|\le1.01(n/N)^y n^{-(1+y)/2-2.78t}.
\end{equation}
Indeed, use $s_A=\overline{s_B}-y+\kappa$,
$|\gamma|\le e^{10^{-8}y}\mathcal Q^{-y/2}$, and
$N^{|\kappa|}<1.000001$; replacing $\mathcal Q^{1/2}$ by $N$
is absorbed in the factor $1.01$.

For $0\le t^-\le t^+\le1/5$, put
\[
 u=\sigma_p+5.57t^-,\qquad u_8=\sigma_p+2.78t^-,\qquad
 e=\frac{p-1}{2}-2.78t^-.
\]
In particular $0.469\le e\le1.025$. Define, for $j=0,1,2$,
\begin{align}
 \delta_j&=\sum_{n=2}^{N_1}(e^{t^+\log^2n/4}-1)
                  (\log n)^j n^{-u}
        +\int_{N_1}^\infty(\log y)^j y^{-u_8}\,dy,\label{eq:new-delta}\\
 \rho_0&=1.01N_*^{-p}
          \left(\frac{N_*^{e+1}}{e+1}+N_*^e\right),\notag\\
 L_*&=2\log N_*+3,\qquad
 \rho_1=L_*\rho_0,\qquad \rho_2=(L_*^2+10^{-9})\rho_0,\label{eq:new-rho}\\
 \eta_0&=10^{-6},\qquad \eta_1=4\cdot10^{-5},\qquad
 \eta_2=16\cdot10^{-4}.\notag
\end{align}
Write $Z=\zeta(u)$, $\psi=-\zeta'(u)/\zeta(u)$, and
\[
 \psi_2=\left(\frac{\zeta'}{\zeta}\right)'(u)
       =\sum_{n\ge2}\Lambda_{\rm vM}(n)(\log n)n^{-u},\qquad m=1/Z.
\]
All these quantities are positive. Set
$c_1=1/2+10^{-10}$, $c_2=1/4+10^{-10}$, and
\begin{align}
 e_0&=\delta_0+\rho_0+\eta_0,
 &\varepsilon_0&=Ze_0,\notag\\
 e_1&=c_1\delta_1+\rho_1+\eta_1,
 &\varepsilon_1&=Z(e_1+c_1\psi e_0),\notag\\
 e_2&=c_2\delta_2+\rho_2+\eta_2+10^{-9},
 &\varepsilon_2&=Z\{e_2+2c_1\psi e_1+c_2(\psi_2+\psi^2)e_0\}.
 \label{eq:new-errors}
\end{align}

\begin{proposition}\label{prop:two-density}
If $\varepsilon_0<1/2$, then for $x\ge X$ and $t^-\le t\le t^+$,
\begin{align}
 S(x,p,t)&\ge C^*:=5.5716-10^{-6}-c_1\psi
                         -\frac{\varepsilon_1}{1-\varepsilon_0},
                         \label{eq:new-C}\\
 \partial_pS(x,p,t)&\le G^*:=c_2\psi_2
       +\frac{\varepsilon_2}{1-\varepsilon_0}
       +\left(\frac{\varepsilon_1}{1-\varepsilon_0}\right)^2
       +2\cdot10^{-10}.\label{eq:new-G}
\end{align}
\end{proposition}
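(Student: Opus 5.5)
Fix $x\ge X$ and $t\in[t^-,t^+]$, and put $z_0=x+ip$. We work on the disk $|z-z_0|\le r_0=1/20$, which lies in $3\le\Ima z\le 3.1$, with the fixed cutoff $N_0=N(z_0)\ge N_*$. On this disk set
\[
F=H_t/B_t,\qquad P=P_{N_0}(s_B,t),\qquad R=\gamma P_{N_0}(s_A,t),\qquad E=F-P-R .
\]
Since $S=-\Ima(H_t'/H_t)$ and $H_t=B_tF$, we have
\[
S=-\Ima(B_t'/B_t)-\Ima(F'/F).
\]
The vertical derivative is $\partial_p=\partial_y$, and $F'/F$ is holomorphic, so
\[
\partial_pS=-\partial_y\Ima(B_t'/B_t)-\Rea\bigl((F'/F)'\bigr).
\]
Thus we need a lower bound for $-\Ima(B_t'/B_t)$ and an upper bound for its $y$-derivative. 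We also need bounds for $|F'/F|$ and for $(F'/F)'=F''/F-(F'/F)^2$. The normalizer part comes from the elementary bounds on $\alpha,\alpha'$ and the check $L/4>5.5716$. This gives $-\Ima(B_t'/B_t)\ge 5.5716-10^{-6}$. The $y$-derivative of $B_t'/B_t$ involves $\alpha'$, which is $O(1/x)$; this accounts for the $2\cdot10^{-10}$ slack in $G^*$.

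The main step is to write $P$ as zeta plus perturbation, as in Proposition~\ref{prop:euler-step}, but now through second derivatives. Let $s=s_B(z_0)$, so $\Rea s\ge u$. Then
\[
P=\zeta(s)+\mathcal D_0 ,
\]
where $\mathcal D_0$ collects the zeta tail beyond $N_0$ and the heat perturbation $\sum(h_n-1)n^{-s}$. Differentiating in $z$ with $N_0$ and $t$ frozen, using $s_B'\approx -i/2$ (so $|s_B'|\le c_1$, $|s_B'|^2\le c_2$) and $|s_B''|=O(1/x)$, gives
\[
P'=s_B'\zeta'(s)+\mathcal D_1,\qquad P''=s_B'^2\zeta''(s)+\mathcal D_2 .
\]
We bound $|\mathcal D_j|\le c_j\delta_j$, with $c_0=1$. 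Terms with $n\le N_1$ are estimated using $|h_n-1|$ and $\Rea s\ge u$. Terms with $n>N_1$ are estimated crudely by $|h_nn^{-s}|\le n^{-u_8}$ from Lemma~\ref{lem:heat-gain}; the integral in \eqref{eq:new-delta} covers this tail and also the zeta tail. The reflected term is handled by \eqref{eq:reflected-heat} and $\sum_{n\le N}n^{e}\le N^{e+1}/(e+1)+N^e$, giving $|R|\le\rho_0$. Termwise differentiation multiplies each term by at most $L+2<L_*$, which gives $|R'|\le\rho_1$ and $|R''|\le\rho_2$; the $10^{-9}$ absorbs $\gamma''$ and $s_A''$ corrections. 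For $E$, Lemma~\ref{lem:approx} gives $|E|<2\cdot10^{-6}$ on the disk, and Cauchy's estimates at radius $r_0$ give $|E'(z_0)|\le\eta_1$ and $|E''(z_0)|\le 2|E|/r_0^2\le\eta_2$. The Euler product then gives $|1/\zeta(s)|\le Z$, $|\zeta'/\zeta|\le\psi$, and $|(\zeta'/\zeta)'|\le\psi_2$, hence $|\zeta''/\zeta|\le\psi_2+\psi^2$.

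To assemble, divide by $\zeta(s)$:
\[
F/\zeta=1+\theta_0,\qquad F'/\zeta=s_B'\zeta'/\zeta+\theta_1,\qquad F''/\zeta=s_B'^2\zeta''/\zeta+\theta_2 ,
\]
with $|\theta_j|\le Ze_j$. Then
\[
\frac{F'}{F}=\frac{s_B'\zeta'/\zeta+\theta_1}{1+\theta_0}
=s_B'\frac{\zeta'}{\zeta}+\frac{\theta_1-\theta_0 s_B'\zeta'/\zeta}{1+\theta_0}.
\]
The last fraction has modulus at most $\varepsilon_1/(1-\varepsilon_0)$, which yields \eqref{eq:new-C}. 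For the second derivative we write $(F'/F)'=(\zeta'/\zeta)'(s)s_B'^2+\Xi$, where $s_B'^2\approx-1/4$. The main term contributes $-\Rea(s_B'^2(\zeta'/\zeta)')\le c_2\psi_2$ to $\partial_pS$. Expanding $F''/F-(F'/F)^2$ around the zeta quantities, the error $\Xi$ is bounded by $\varepsilon_2/(1-\varepsilon_0)+(\varepsilon_1/(1-\varepsilon_0))^2$. Here the cross term $2c_1\psi e_1$ and the $c_2(\psi_2+\psi^2)e_0$ term come from multiplying out the perturbations.

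The main obstacle is this last algebra. One must organize $F''/F-(F'/F)^2$ so that the exact zeta combination $(\zeta''/\zeta-(\zeta'/\zeta)^2)s_B'^2$ appears without loss, and the remainder matches $\varepsilon_2$ precisely, without an extra factor $(1-\varepsilon_0)^{-2}$. A second difficulty is keeping the sign bookkeeping correct for $\partial_p$ versus $d/dz$. I would first try writing $F=\zeta(s)(1+\theta_0)$ with $\theta_0$ holomorphic, so that
\[
\frac{F'}{F}=\frac{\zeta'}{\zeta}s_B'+\frac{\theta_0'}{1+\theta_0},
\]
and then bound $\theta_0'$ and $\theta_0''$ directly in terms of $e_0,e_1,e_2$.
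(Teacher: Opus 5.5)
Your outline follows the paper's argument: the frozen cutoff on the disk of radius $1/20$, Cauchy's estimates for $E$, comparison of $P$ with $\zeta(s_B)$ through the $\delta_j$ (one integral tail covering both the perturbation tail and the zeta tail), and the Euler-product bounds $Z,\psi,\psi_2$. One step fails as written, however. You bound the reflected derivatives by saying that each termwise differentiation costs at most a factor $L+2<L_*$. But $L=\log(x/4\pi)$ is unbounded on $x\ge X$, while $L_*=2\log N_*+3$ is a fixed constant. The inequality holds at $x=X$ (about $24.29<25.29$) but fails once $L>2\log N_*+1$, roughly for $x>eX$.

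What holds uniformly is $|\lambda_n|\le L+2<2\log N_0+3$, which follows from $\mathcal Q<(N_0+1)^2$. Hence $|R^{(j)}(z_0)|$ is at most $(2\log N_0+3)^j$ (with $10^{-9}$ added when $j=2$) times $1.01N_0^{-p}(N_0^{e+1}/(e+1)+N_0^e)$. To replace $N_0$ by $N_*$ you must show that this product decreases for $N_0\ge N_*$. Both powers of $N_0$ have exponent at most $1-u_8\le-41/40$, while the logarithmic factor adds at most $4/(2\log N_0+3)<0.16$ to the logarithmic derivative. The same monotonicity is already needed, tacitly, for $|R|\le\rho_0$. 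Also, Lemma~\ref{lem:approx} is stated for the pointwise cutoff $N(z)$, so $|E|<2\cdot10^{-6}$ with the frozen cutoff needs the extra-summand estimate of \eqref{eq:frozen-error}.

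The second-derivative step you flag as the main obstacle is closed by exactly the route in your last sentence, which is the paper's. Put $T=F-\zeta(s_B)$, $\iota=1/\zeta(s_B)$, $\varrho=\zeta'/\zeta$ evaluated at $s_B$, and $\varepsilon=T\iota$ (your $\theta_0$). Then $\iota'=-s_B'\varrho\iota$ and $\iota''=\iota\{(s_B')^2(\varrho^2-\varrho')-s_B''\varrho\}$. Leibniz's rule for $T\iota$ gives $|\varepsilon'|\le Z(e_1+c_1\psi e_0)=\varepsilon_1$ and $|\varepsilon''|\le Z\{e_2+2c_1\psi e_1+c_2(\psi_2+\psi^2)e_0\}=\varepsilon_2$ directly. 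The identity $(F'/F)'=s_B''\varrho+(s_B')^2\varrho'+\varepsilon''/(1+\varepsilon)-(\varepsilon'/(1+\varepsilon))^2$ then puts only a single factor $(1-\varepsilon_0)^{-1}$ on $\varepsilon_2$, and it produces the squared term in $G^*$.

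Your bookkeeping of the $s_B''$ terms needs correcting, though. The bound $|\mathcal D_2|\le c_2\delta_2$ is false as stated: $P''-(s_B')^2\zeta''(s_B)$ also contains $s_B''\zeta'(s_B)$, and $(P-\zeta(s_B))''$ contains $s_B''$ times the $\log n$-weighted perturbation. Since $|s_B''|\le t|\alpha''|/8<10^{-20}$, these terms are harmless, but they must be paid for. The separate $10^{-9}$ in $e_2$ covers them, via $|T''|\le c_2\delta_2+\rho_2+\eta_2+10^{-20}\delta_1$ and the $10^{-20}\psi$ term in $\iota''$; this uses $\delta_1<7$, $\psi<1$, and $e_0<1/2$. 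The remaining pieces, $s_B''\varrho$ and $(B_t'/B_t)'=-\beta'(w)/4$, go into the final $2\cdot10^{-10}$.
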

\begin{proof}
All primes below denote complex differentiation in $z$, with $t$ fixed.
Write $H_t=B_tF$. Since $S=-\Ima(H_t'/H_t)$,
$\partial_yS=-\Rea(H_t'/H_t)'$.

\emph{Normalizer.} Direct differentiation gives
\[
 \alpha''(s)=s^{-3}+2(s-1)^{-3}-\tfrac12s^{-2},\qquad
 |\alpha''|\le2/x^2+24/x^3<3/x^2.
\]
With $\beta=\alpha+(t/2)\alpha\alpha'$, one has
$B_t'/B_t=-i\beta(w)/2$ and $(B_t'/B_t)'=-\beta'(w)/4$.
The bounds already proved for $\alpha$ and $\alpha'$ yield
\[
 |\beta'|\le|\alpha'|+\tfrac t2(|\alpha'|^2+|\alpha||\alpha''|)
 <10^{-10},\qquad |s_B''|\le t|\alpha''|/8<10^{-20}.
\]
More precisely, $|s_B'|\le1/2+t/[4(x-6)]<1/2+10^{-12}$.
Consequently $|s_B'|\le c_1$ and $|s_B'|^2\le c_2$; the latter
inequality is obtained from this sharper bound, not by squaring $c_1$.

\emph{Fixed cutoff.} At $z_0=x+ip$ put $N_0=N(z_0)$ and, on
$|z-z_0|\le r_0$, define
\[
 P=P_{N_0}(s_B,t),\quad R=\gamma P_{N_0}(s_A,t),\quad E=F-P-R,
 \qquad T=F-\zeta(s_B),\quad \varepsilon=T/\zeta(s_B).
\]
The disk lies in $3\le\Ima z\le3.1$ and $\Rea z>6\cdot10^{10}$.
The cutoff changes by at most one. The same argument as in
\eqref{eq:frozen-error}, with extra summands bounded by
$3/60000^2$, gives $|E|<2\cdot10^{-6}$ on the disk and
$|E(z_0)|<10^{-6}$. Cauchy's estimates therefore give
$|E'(z_0)|\le\eta_1$ and $|E''(z_0)|\le\eta_2$.

At the centre, $\Rea s_B\ge u$. The finite perturbation
$P-\zeta(s_B)$ has terms $(h_n-1)n^{-s_B}$ for $n\le N_0$
and $-n^{-s_B}$ for $n>N_0$.
Because $N_0\ge N_*>N_1$, the perturbation tail $N_1<n\le N_0$
and the zeta tail $n>N_0$ are disjoint. Each is bounded by
$n^{-u_8}$, so their union is covered by the single integral tail
in \eqref{eq:new-delta}; there is no omitted zeta tail.
Termwise differentiation then gives
\begin{align*}
 |P-\zeta(s_B)|&\le\delta_0,\\
 |(P-\zeta(s_B))'|&\le c_1\delta_1,\\
 |(P-\zeta(s_B))''|&\le c_2\delta_2+10^{-20}\delta_1.
\end{align*}
Here the integral test applies since $(\log y)^j y^{-u_8}$ decreases
for $y\ge N_1$, $j\le2$.

For $R$, equation~\eqref{eq:reflected-heat} and $e>0$ give
\[
 \sum_{n\le N_0}|\gamma h_nn^{-s_A}|
 \le1.01N_0^{-p}\left(\frac{N_0^{e+1}}{e+1}+N_0^e\right).
\]
A reflected summand $\tau_n$ has $\tau_n'=\lambda_n\tau_n$,
$\tau_n''=(\lambda_n^2+\lambda_n')\tau_n$, where
$\lambda_n=\gamma'/\gamma-s_A'\log n$.
The normalizer bounds give $|\lambda_n|\le L+2$ and
$|\lambda_n'|<10^{-9}$: use
$\gamma'/\gamma=(i/2)(\beta(v)+\beta(w))$ and the bounds for
$\beta'$ and $s_A''$.
Since $L+2<2\log N_0+3$, the preceding sum multiplied by
$(2\log N_0+3)^j$, $j\le2$, decreases with $N_0\ge N_*$.
Indeed, each power of $N_0$ has exponent at most $1-u_8\le-1.025$,
whereas its logarithmic factor contributes at most
$4/(2\log N_0+3)<0.16$ to its logarithmic derivative.
Thus $|R^{(j)}|\le\rho_j$, for $j=0,1,2$.
It follows that $|T|\le e_0$, $|T'|\le e_1$, and
\[
 |T''|\le c_2\delta_2+\rho_2+\eta_2+10^{-20}\delta_1.
\]

\emph{Division by zeta.} Put $\varrho=\zeta'/\zeta$ and
$\iota=1/\zeta(s_B)$. The absolutely convergent Euler product gives
$|\varrho(s_B)|\le\psi$, $|\varrho'(s_B)|\le\psi_2$, and
$|\iota|\le Z$. Differentiation yields
\[
 |\iota'|\le Zc_1\psi,\qquad
 |\iota''|\le Z\{10^{-20}\psi+c_2(\psi_2+\psi^2)\}.
\]
The small $s_B''$ term in this second inequality must be retained.
In the estimate for $(T\iota)''$, the two such contributions total
at most $10^{-20}Z(\delta_1+\psi e_0)$.
Since $Ze_0<1/2$, we have $\delta_0<1/2$ and $e_0<1/2$.
Moreover $\psi\le J(2)<1$ and $\delta_1<7$: the finite part is
at most $(\log N_1)\delta_0<6$ and its integral tail is below
$10^{-3}$. These elementary inequalities are included in
Section~\ref{sec:kernel-checks}. Therefore the contribution is
less than $8\cdot10^{-20}Z$, covered by the allowance $10^{-9}Z$
in $e_2$. Leibniz's rule now proves
$|\varepsilon^{(j)}|\le\varepsilon_j$, $j=0,1,2$.

Finally $F=\zeta(s_B)(1+\varepsilon)$, so
\begin{align*}
 F'/F&=s_B'\varrho(s_B)+\varepsilon'/(1+\varepsilon),\\
 (F'/F)'&=s_B''\varrho(s_B)+(s_B')^2\varrho'(s_B)
       +\varepsilon''/(1+\varepsilon)
       -(\varepsilon'/(1+\varepsilon))^2.
\end{align*}
Combine these identities with
$-\Ima(B_t'/B_t)\ge L/4-10^{-6}$ and
$|(B_t'/B_t)'|<10^{-10}$ to obtain \eqref{eq:new-C}--\eqref{eq:new-G}.
In particular, the last $2\cdot10^{-10}$ covers both the normalizer
and $s_B''\varrho$ terms. The time endpoints in the proposition
were inserted termwise in the heat bounds and the positive finite
sums; no monotonicity of $H_t'/H_t$ itself is asserted.
\end{proof}

\subsection{The stronger stepped theorem}
\begin{theorem}\label{thm:derivative-step}
Assume $M(0)\le q_0<B=p^2$ and choose
$0=t_0<\cdots<t_n\le1/5$, $d_i=t_{i+1}-t_i$, and
$q_0>q_1>\cdots>q_n\ge b=1/25$.
For $0\le i<n$, suppose $C_i,G_i,\lambda_i,\mu_i\ge0$ satisfy
\[
 S(x,p,t)\ge C_i,\qquad \partial_pS(x,p,t)\le G_i
 \quad(x\ge X,\ t_i\le t\le t_{i+1}),
\]
and the polynomial \eqref{eq:kernel-poly}, with
$(A_2,\lambda,\mu)=(q_i,\lambda_i,\mu_i)$, is nonnegative for
$D\ge0$, $0\le r\le q_i$. Define
\begin{equation}\label{eq:new-k}
 k_i=4\left\{\frac{\lambda_i C_i}{p}-\mu_iG_i
       -\frac{2\lambda_i}{B-q_i}
       -\frac{2\mu_i(B+q_i)}{(B-q_i)^2}\right\}>0.
\end{equation}
If $e^{k_id_i}(2+k_iq_{i+1})\ge2+k_iq_i$ for every $i$, then
$M(t_n)\le q_n$ and $\Lambda\le t_n+q_n/2$.
\end{theorem}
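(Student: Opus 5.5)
The argument will follow the proof of Theorem~\ref{thm:step}: induction on $i$, with the barrier $Q_i(t)=(q_i+2/k_i)e^{-k_i(t-t_i)}-2/k_i$. The only new ingredient is the differential inequality at a highest simple zero, which now comes from Lemma~\ref{lem:kernel-derivative} in place of the inequality $K_h\ge(h/p)K_p$. Everything about the comparison itself carries over unchanged: strict perturbed barriers, compact attainment of first contact via Polymath's Theorem~1.5(i), backward Hermite splitting at multiple contacts, and the strip theorem for persistence below $b$ and for the final bound $\Lambda\le t_n+q_n/2$. So the work is to show that at a highest simple zero $x+ih$ with $q=h^2\in(b,q_i]$ and $t\in[t_i,t_{i+1}]$ one has $q'\le-2-k_iq$.

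For that step, Lemma~\ref{lem:head} first places the zero at $|x|>X$, and by symmetry we may take $x>X$. The zero dynamics give
\[
 q'=2hh',\qquad h'=-\frac1h-2\sum K_h(d_j,v_j),
\]
where the sum runs over the other conjugate pairs and real zeros carry half weight. Every other zero has $0\le v_j\le h$, because $x+ih$ is highest. Pairs with $(d,v)=(0,h)$ would make the zero multiple, so they are excluded. Lemma~\ref{lem:kernel-derivative} with $A_2=q_i$ therefore applies termwise and gives
\[
 h'\le-\frac1h-2h\sum\Bigl(\frac{\lambda_i}{p}K_p-\mu_i\partial_pK_p\Bigr).
\]
Next I express the sums through $S$. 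Writing
\[
 S(x,p,t)=\sum_{\text{all pairs}}K_p(d,v),
\]
the sum over the other zeros equals $S$ minus the self term $K_p(0,h)=2p/(p^2-q)$. Likewise, $\partial_pS$ is the full sum of $\partial_pK_p$, and its self term is
\[
 \partial_pK_p(0,h)=-2(p^2+q)/(p^2-q)^2.
\]
Substituting, with $\lambda_i,\mu_i\ge0$, $S\ge C_i$ and $\partial_pS\le G_i$, gives
\[
 q'\le-2-4q\Bigl\{\frac{\lambda_iC_i}{p}-\mu_iG_i-\frac{2\lambda_i}{p^2-q}-\frac{2\mu_i(p^2+q)}{(p^2-q)^2}\Bigr\}.
\]
Both subtracted self-term expressions increase in $q<B$. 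Since $q\le q_i$, the brace is at least $k_i/4$, and hence $q'\le-2-k_iq$.

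Two justifications need care, and I expect the \textbf{main obstacle} to be the interchange between $\partial_pS$ and the zero sum. I would handle it by noting that $S(x,p,t)=-\Ima(H_t'/H_t)(x+ip)$ equals the conjugate-pair sum. This follows from the Hadamard product for the even order-one function $H_t$: $H_t'/H_t=\sum 2z/(z^2-\rho^2)$, converging locally uniformly away from the zeros. Differentiating in $p$ is then justified by locally uniform convergence of the derivative series, since $p>h$ keeps $x+ip$ off the zeros ($p^2>q_0\ge q$). The remaining point is checking $k_i>0$ so the barrier is decreasing; this is an explicit hypothesis. Finally, the gate inequality $e^{k_id_i}(2+k_iq_{i+1})\ge2+k_iq_i$ is exactly $Q_i(t_{i+1})\le q_{i+1}$. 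The induction step, including the case where $Q_i$ reaches $b$ early, is then word for word as in Theorem~\ref{thm:step}.
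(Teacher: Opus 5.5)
Your proposal is correct and follows the paper's proof essentially step for step. The steps match: termwise use of Lemma~\ref{lem:kernel-derivative} with $A_2=q_i$, removal of the self pair via $K_p(0,h)=2p/(B-q)$ and $\partial_pK_p(0,h)=-2(B+q)/(B-q)^2$, monotonicity in $q\le q_i$ to reach $k_i/4$, and the barrier induction carried over from Theorem~\ref{thm:step}. The one minor difference is in justifying the $p$-derivative: you go through the Hadamard product and Weierstrass's theorem, whereas the paper cites absolute, locally uniform convergence of the conjugate-pair sums (summands $O(d_j^{-2})$), and that same absolute convergence is what justifies your regrouping from $\pm\rho$ pairs to conjugate pairs.
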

\begin{proof}
At a highest simple zero $x+ih$, with $b<q=h^2\le q_i$, symmetry
and Lemma~\ref{lem:head} permit $x>X$. Its velocity is
$h'=-1/h-2\sum'K_h(d_j,v_j)$, where the sum omits its own
conjugate pair and real zeros have half-pair weight.
Every $v_j\le h$, so Lemma~\ref{lem:kernel-derivative} applies.
The full sums at height $p$ are $S$ and $\partial_pS$. They converge
absolutely and locally uniformly in $p$ above the zero strip, as do
the differentiated sums: paired summands are $O(d_j^{-2})$ there.
The removed pair contributes
\[
 K_p(0,h)=\frac{2p}{B-q},\qquad
 \partial_pK_p(0,h)=-\frac{2(B+q)}{(B-q)^2}.
\]
Consequently
\[
 \sum\nolimits'K_h\ge h\left\{
 \frac{\lambda_i}{p}\left(C_i-\frac{2p}{B-q}\right)
 -\mu_i\left(G_i+\frac{2(B+q)}{(B-q)^2}\right)\right\}
 \ge hk_i/4.
\]
The last bracket decreases with $q<B$. Thus $q'\le-2-k_iq$.
Induction, the strict barriers, compact attainment at positive time,
and backward vertical Hermite splitting are exactly as in
Theorem~\ref{thm:step}; at contact the classical strip theorem gives
$q\le q_i$, which is the polynomial certificate's range. If the
barrier reaches $b$ early, the strip theorem keeps $M\le b$ for
all later times. Otherwise the stated exponential gate gives
$M(t_{i+1})\le q_{i+1}$. The final strip theorem proves the conclusion.
\end{proof}

This is also a general conversion theorem: a zero-free half-plane
$\Rea s>\sigma$ supplies $q_0=(2\sigma-1)^2$ by the functional
equation. Any table satisfying the stated gates gives an explicit
bound $t_n+q_n/2$, independently of whether the seven-eighths
assertion is true. The numerical table below is certified for
$q_0=9/16$.

\begin{proof}[Proof of Theorem~\ref{thm:main}]
The half-plane hypothesis gives $M(0)\le9/16$ by \eqref{eq:M0}.
Use Theorem~\ref{thm:derivative-step} with $p=61/20$ and
Table~\ref{tab:kernel-steps}. Proposition~\ref{prop:two-density},
with $t^-=t_i$ and $t^+=t_{i+1}$, gives the required $C_i,G_i$;
Section~\ref{sec:kernel-checks} supplies all finite certificates.
The final time is $t_{24}=23/200+463/100000=11963/100000$,
and $q_{24}=1/25$. Hence
\[
 \Lambda\le\frac{11963}{100000}+\frac1{50}
          =\frac{13963}{100000}=0.13963.
\]
\end{proof}

\begin{table}[htbp]
\centering\small
\caption{Derivative comparison at $p=61/20$. Here $t_i=i/200$ for $0\le i\le23$,
$d_i=1/200$ for $i<23$, $d_{23}=463/100000$, and $q_{24}=1/25$.
Each next height is the following row's $q_i$, with the stated final height.}
\label{tab:kernel-steps}
\begin{tabular}{rrrrrr}
\toprule
$i$ & $10^6q_i$ & $10^4C_i$ & $10^4G_i$ & $10^3\lambda_i$ & $10^3\mu_i$\\
\midrule
0 & 562500 & 52902 & 2340 & 1718 & 868\\
1 & 528937 & 52964 & 2322 & 1729 & 888\\
2 & 496634 & 53031 & 2284 & 1739 & 907\\
3 & 465546 & 53100 & 2234 & 1749 & 927\\
4 & 435634 & 53172 & 2174 & 1759 & 945\\
5 & 406850 & 53245 & 2107 & 1769 & 964\\
6 & 379163 & 53318 & 2035 & 1778 & 982\\
7 & 352551 & 53390 & 1962 & 1791 & 1015\\
8 & 326987 & 53462 & 1887 & 1800 & 1034\\
9 & 302441 & 53533 & 1813 & 1808 & 1051\\
10 & 278890 & 53602 & 1739 & 1822 & 1088\\
11 & 256302 & 53670 & 1667 & 1830 & 1106\\
12 & 234657 & 53736 & 1596 & 1838 & 1123\\
13 & 213926 & 53800 & 1528 & 1845 & 1140\\
14 & 194091 & 53862 & 1462 & 1853 & 1156\\
15 & 175117 & 53922 & 1399 & 1866 & 1197\\
16 & 156984 & 53980 & 1338 & 1872 & 1213\\
17 & 139669 & 54036 & 1280 & 1879 & 1229\\
18 & 123142 & 54090 & 1225 & 1885 & 1244\\
19 & 107379 & 54143 & 1172 & 1892 & 1259\\
20 & 92351 & 54193 & 1122 & 1897 & 1273\\
21 & 78037 & 54242 & 1074 & 1910 & 1318\\
22 & 64407 & 54288 & 1028 & 1915 & 1332\\
23 & 51439 & 54334 & 985 & 1920 & 1345\\
\bottomrule
\end{tabular}
\end{table}

\section{Elementary checks and interpretation}\label{sec:checks}
For $x\ge0$, write
\[
 E_n(x)=\sum_{j=0}^n\frac{x^j}{j!},\qquad
 \overline E_n(x)=E_n(x)+\frac{x^{n+1}}{(n+1)!}
                  \left(1-\frac{x}{n+2}\right)^{-1}.
\]
When $x<n+2$, these bound $e^x$ below and above. The scalar inequalities
needed above can be checked without numerical integration:
\begin{gather*}
 \overline E_{160}(111/5)<\frac{7\cdot60000000000}{88},\\
 E_{20}\left(\frac{4675}{988}\frac4{25}\right)
       >\frac{73691}{34608}.
\end{gather*}
Use $\pi<22/7$ for the first. To bound $\zeta(5/2)$, the integral test
and integer square roots give the rational certificate
\[
 1+\sum_{n=2}^{64}
 \frac{10^{12}}{n^2\lfloor\sqrt{n\,10^{24}}\rfloor}
       +\frac{2}{3\cdot64\cdot8}<\frac{269}{200}.
\]
For the logarithm-weighted sum $J$, define the rational upper bound
\[
 A_{12}(r)=2\sum_{j=0}^{12}\frac{r^{2j+1}}{2j+1}
                  +\frac{2r^{27}}{27(1-r^2)}\qquad(0\le r\le1/3).
\]
The series for $2\operatorname{arctanh}r$ shows
$\log((1+r)/(1-r))\le A_{12}(r)$. For $2\le n\le64$, let
$2^{m_n}\le n<2^{m_n+1}$, set
$r_n=(n-2^{m_n})/(n+2^{m_n})$, and put
$\ell_n=m_n A_{12}(1/3)+A_{12}(r_n)$. Then $\log n\le\ell_n$,
and the integral test gives the exact rational certificate
\[
 J\le\sum_{n=2}^{64}
 \frac{10^{12}\ell_n}{n^2\lfloor\sqrt{n\,10^{24}}\rfloor}
 +\frac1{512}\left(\frac23\ell_{64}+\frac49\right)
 <\frac{39}{100}.
\]
The displayed upper bound is $0.3874050364\ldots$.
Also $E_{60}(12)>60000$, as used in the reflected derivative bound.
The lower bound for $S(x,4,t)$ in Proposition~\ref{prop:density}
then reduces to a rational inequality with value greater than
$5.25228>21/4$.

The threshold brackets in Remark~\ref{rem:halfplane} are certified similarly: at a
lower endpoint compare $(2+k a)/(2+k b)$ with
$E_{20}(k d)$, and at an upper endpoint compare it with
$\overline E_{20}(k d)$. Use $d=9/50$ for $F=1/5$ and
$d=1/5$ for the time-domain endpoint. All quantities in these checks
are rational.

For the Gaussian checks, replace $\log3$ by $11/10$, $\log2$ by $7/10$,
and $D^{-1/2}$ by $D^{-1}$ in the upper bounds. At
$A=30000000000-3$, $t=1/5$, the resulting rational-exponential expressions
are respectively less than $10^{-8}$, $3/10$, and $4/100$, as follows
from $\overline E_{100}$. These three inequalities, the two exponential
checks above, the derivative-sum certificate, and the threshold
brackets were checked in exact rational
arithmetic in preparing this note. The analytic estimates are proved in
the text; a scalar check alone would not establish them.

\subsection{Finite certificates for the stepped bounds}\label{sec:step-checks}
The stronger normalizer bound follows from the exact rational check
\[
 \overline E_{160}(557/25)<\frac{7X}{88},
\]
using $\pi<22/7$. For $u>1$ and $K_0=65536$, define
\begin{align*}
 T_0(u)&=\frac{K_0^{1-u}}{u-1},\\
 T_1(u)&=K_0^{1-u}
       \left(\frac{\log K_0}{u-1}+\frac1{(u-1)^2}\right),\\
 Z_*(u)&=\sum_{n=1}^{K_0}n^{-u}+T_0(u),\\
 J_*(u)&=\sum_{n=2}^{K_0}(\log n)n^{-u}+T_1(u),\\
 \psi_*(u)&=\sum_{\substack{\ell\ \text{prime},\ j\ge1\\\ell^j\le K_0}}
                       (\log\ell)\ell^{-ju}+T_1(u).
\end{align*}
For all the exponents used here, $u\ge\sigma_p>2$, so the integral
test and $0\le\Lambda_{\rm vM}(n)\le\log n$ give
$Z\le Z_*$, $J\le J_*$, and $\psi\le\psi_*$.
The same integrals with lower limit $60000$ or $65536$ verify the
four tail allowances in Proposition~\ref{prop:euler-step}.

Tables~\ref{tab:tri-cert} and~\ref{tab:eul-cert} give rational strict
upper bounds for these explicit finite expressions, and for
\eqref{eq:delta0}--\eqref{eq:delta1}. Each scalar column has denominator $10^{10}$; the column $i$
is the row index.
For the triangle table use $u_i=\sigma_p+2.78t_i$; for the Euler
table use $u_i=\sigma_p+5.57t_i$ and $t^+=t_{i+1}$ from the
corresponding step table. Thus, for example, $U_Z>Z_*(u_i)$ and
$U_0>\delta_0$; these are upper bounds rather than rounded
approximations.

\begin{table}[htbp]
\centering\small
\caption{Scalar upper bounds for the triangle table; scalar entries have denominator $10^{10}$.}\label{tab:tri-cert}
\begin{tabular}{rrr}
\toprule
$i$ & $10^{10}U_Z$ & $10^{10}U_J$\\
\midrule
0 & 16402710674 & 9276763593\\
1 & 16152183902 & 8753918709\\
2 & 15915614854 & 8271878376\\
3 & 15691927634 & 7826527146\\
4 & 15480153540 & 7414255565\\
5 & 15279418035 & 7031887314\\
6 & 15088929585 & 6676618291\\
7 & 14907970034 & 6345965473\\
8 & 14735886302 & 6037723783\\
9 & 14572083176 & 5749929570\\
10 & 14416017042 & 5480829566\\
11 & 14267190412 & 5228854407\\
12 & 14125147139 & 4992595958\\
13 & 13989468209 & 4770787832\\
14 & 13859768041 & 4562288608\\
15 & 13735691217 & 4366067315\\
\bottomrule
\end{tabular}
\end{table}

\begin{table}[htbp]
\centering\small
\caption{Scalar upper bounds for the Euler table; scalar entries have denominator $10^{10}$.}\label{tab:eul-cert}
\begin{tabular}{rrrrr}
\toprule
$i$ & $10^{10}U_Z$ & $10^{10}U_\psi$ & $10^{10}U_0$ & $10^{10}U_1$\\
\midrule
0 & 16402710674 & 5657220524 & 50075480 & 155020632\\
1 & 15914787749 & 5197368262 & 85920176 & 252268064\\
2 & 15478670974 & 4788551062 & 111472350 & 312846838\\
3 & 15086927154 & 4423194675 & 129334234 & 347974299\\
4 & 14733472069 & 4095166366 & 141433101 & 365502445\\
5 & 14413277797 & 3799434942 & 149205476 & 370949407\\
6 & 14122153062 & 3531827600 & 153728284 & 368204249\\
7 & 13856576220 & 3288851197 & 155813037 & 360012255\\
8 & 13613566708 & 3067557862 & 156074015 & 348311492\\
9 & 13390585030 & 2865442067 & 154978007 & 334467800\\
10 & 13185454126 & 2680360586 & 152880870 & 319439900\\
11 & 12996296991 & 2510469419 & 150054571 & 303896046\\
12 & 12821486753 & 2354173570 & 146707325 & 288296883\\
13 & 12659606398 & 2210086644 & 142998686 & 272954564\\
14 & 12509416032 & 2076998092 & 133116245 & 246627274\\
\bottomrule
\end{tabular}
\end{table}

We verified these finite sums with a fresh implementation using
outward-rounded Arb arithmetic at 160-bit precision.
Only exact integers and rationals were supplied as inputs. For a
purely rational alternative, write $n=2^m(1+r)/(1-r)$, $0\le r<1/3$,
and bracket $\log n$ by the first $33$ terms of the
$2\operatorname{arctanh}$ series and its geometric remainder, as in
$A_{12}$ above. Exponentials are then bracketed by $E_{160}$ and
$\overline E_{160}$; negative exponents are handled by reciprocation.
All positive exponential arguments needed for these finite sums are
below $40$. This gives a finite rational verification procedure for
each printed entry; it is not necessary to evaluate zeta or its
logarithmic derivative as a special function.

After these upper bounds are supplied, every remaining comparison
is rational. In particular the certified lower bounds for $S$ are
\begin{align*}
 C_i^{\rm tri,*}
 &=\frac{557}{100}-10^{-6}
              -\frac{\nu U_J+\epsilon_1}{2-U_Z-\epsilon_0},\\
 C_i^{\rm E,*}
 &=\frac{557}{100}-10^{-6}
     -\frac{\nu(U_\psi+U_1U_Z)+\epsilon_1U_Z}
                  {1-(U_0+\epsilon_0)U_Z}.
\end{align*}
The denominators are positive. The second expression follows by
multiplying numerator and denominator of \eqref{eq:euler-step} by
$Z$; its subtracted quotient is increasing in each nonnegative upper
bound $U_Z,U_\psi,U_0,U_1$ on this domain.
The exact rational comparisons with the step-table values give
\[
 \min_i(C_i^{\rm tri,*}-C_i)>8.5\cdot10^{-7},\qquad
 \min_i(C_i^{\rm E,*}-C_i)>1.48\cdot10^{-6}.
\]
Finally, with $k_i$ as in Theorem~\ref{thm:step}, exact arithmetic gives
\[
 \begin{array}{ll}
 \displaystyle\min_i\{E_{20}(k_id_i)(2+k_iq_{i+1})-(2+k_iq_i)\}
       >3.48\cdot10^{-7},&\text{triangle table},\\[3pt]
 \displaystyle\min_i\{E_{20}(k_id_i)(2+k_iq_{i+1})-(2+k_iq_i)\}
       >1.25\cdot10^{-7},&\text{Euler table}.
 \end{array}
\]
The chains, $p^2\ge5q_i$, $k_i>0$, $q_{i+1}\ge b$, and
$t_i+d_i\le1/5$ are also rational checks. The small margins explain
why the certificate tables retain ten decimal places.
No supplementary code or data are required to specify the checks.

\subsection{Certificates for the derivative comparison}\label{sec:kernel-checks}
All constants and finite formulas needed for the new bound are given
here and in Tables~\ref{tab:kernel-steps}--\ref{tab:kernel-reflection}.
No external table or verification program is needed to specify them.

\emph{Normalizer and small derivative terms.}
The sharper floor $L/4>5.5716$ and the cutoff $N\ge69098$ follow from
\[
 \overline E_{160}(13929/625)<\frac{113X}{1420},\qquad
 69098^2<\frac{113X}{1420},
\]
using $\pi<355/113$. Both displayed inequalities hold in exact
rational arithmetic. The weaker estimate $\pi<22/7$ used above
is not sharp enough for this new logarithm floor.
Also, the integral test and the logarithm series below give
\[
 J(2)\le\sum_{n=2}^{16}\frac{\log n}{n^2}
                  +\frac{\log16+1}{16}<1,
 \qquad \log N_1<12,\qquad
 \int_{N_1}^\infty(\log y)y^{-81/40}\,dy<10^{-3}.
\]
These establish the small-term estimates used when differentiating
$1/\zeta(s_B)$.

\emph{Kernel positivity on an unbounded domain.}
For row $i$ let $\Phi_i$ be \eqref{eq:kernel-poly} with
$A_2=q_i$, $\lambda=\lambda_i$, $\mu=\mu_i$, and $B=(61/20)^2$.
Compactify the first variable by setting
\begin{equation}\label{eq:compact-poly}
 \Psi_i(U,V)=\frac{(1-U)^4}{B^4}
              \Phi_i\left(\frac{BU}{1-U},q_iV\right).
\end{equation}
It extends to a polynomial of degree at most four in each variable
on $[0,1]^2$. For each $\ell=0,\ldots,7$, expand
\[
 \Psi_i((\ell+u)/8,v)=\sum_{k,m=0}^4a^{(i,\ell)}_{km}u^kv^m,
\]
and define its rational Bernstein coefficients by
\begin{equation}\label{eq:bernstein}
 \beta^{(i,\ell)}_{ab}
 =\sum_{\substack{0\le k\le a\\0\le m\le b}}
 a^{(i,\ell)}_{km}
 \frac{\binom ak}{\binom4k}\frac{\binom bm}{\binom4m},
 \qquad 0\le a,b\le4.
\end{equation}
For all $24\cdot8\cdot25=4800$ coefficients, exact rational
arithmetic gives the common bound
\begin{equation}\label{eq:bernstein-gate}
 \beta^{(i,\ell)}_{ab}>\frac{13}{10000}.
\end{equation}
The least coefficient is in row $21$ and equals
\[
 \frac{623287907570432169922330885087}
      {479268282493202500000000000000000}.
\]
The Bernstein basis polynomials
$\binom4a u^a(1-u)^{4-a}\binom4b v^b(1-v)^{4-b}$ are nonnegative
and sum to one. Thus \eqref{eq:bernstein-gate} proves positivity
on the eight rectangles, hence on all of $[0,1]^2$. Equation
\eqref{eq:compact-poly} proves $\Phi_i>0$ for every finite
$D\ge0$, $0\le r\le q_i$; no separate large-$D$ tail is needed.
Equations~\eqref{eq:kernel-poly}, \eqref{eq:compact-poly}, and
\eqref{eq:bernstein} specify each rational check directly from
Table~\ref{tab:kernel-steps}.

\emph{Finite scalar bounds.}
For $v>1$ and $j=0,1,2$, write
\[
 T_j(v)=N_1^{1-v}\sum_{k=0}^j
        \frac{j!}{(j-k)!}\frac{(\log N_1)^{j-k}}{(v-1)^{k+1}}
       =\int_{N_1}^\infty(\log y)^j y^{-v}\,dy.
\]
Use the finite upper expressions $Z_*(u)$ and $\psi_*(u)$ of
Section~\ref{sec:step-checks}, with $K_0=N_1$, and additionally
\[
 \psi_{2,*}(u)=
 \sum_{\substack{\ell\ \mathrm{prime},\ j\ge1\\\ell^j\le N_1}}
          j(\log\ell)^2\ell^{-ju}+T_2(u).
\]
The von Mangoldt inequality $\Lambda_{\rm vM}(n)\le\log n$ proves
$\psi_2(u)\le\psi_{2,*}(u)$. Every scalar entry in the following
two tables is a strict upper bound with denominator $10^{12}$.
Use $t^-=i/200$, $t^+=t^-+d_i$ from Table~\ref{tab:kernel-steps}.
The quantities bounded are $Z_*$, $\psi_*$, $\psi_{2,*}$,
the three expressions \eqref{eq:new-delta}, and the three expressions
\eqref{eq:new-rho}.

These bounds were evaluated independently using outward-rounded Arb
arithmetic at 160-bit precision, followed by upward rounding to the
printed rational values. They can alternatively be verified with
rational operations alone: range-reduce every logarithm to
$2\operatorname{arctanh}r$, $0\le r\le1/3$, and use
\[
 2\sum_{j=0}^{32}\frac{r^{2j+1}}{2j+1}
 \le2\operatorname{arctanh}r
 \le2\sum_{j=0}^{32}\frac{r^{2j+1}}{2j+1}
                     +\frac{2r^{67}}{67(1-r^2)}.
\]
Bracket positive exponentials by $E_{160}$ and
$\overline E_{160}$ and negative ones by reciprocation. All required
positive arguments are below $40$. The sums and tails are explicit;
no special-function evaluation of zeta is required.

\begin{table}[htbp]
\centering\small
\caption{Upper bounds for the three zeta quantities; scalar entries have denominator $10^{12}$.}\label{tab:kernel-zeta}
\begin{tabular}{rrrr}
\toprule
$i$ & $10^{12}U_Z$ & $10^{12}U_\psi$ & $10^{12}U_{\psi_2}$\\
\midrule
0 & 1622101766394 & 548568036655 & 839050132415\\
1 & 1598022583652 & 525889196940 & 790278947343\\
2 & 1575265183899 & 504513355618 & 745403434764\\
3 & 1553729318012 & 484337910062 & 704010822738\\
4 & 1533324564928 & 465270942310 & 665744767760\\
5 & 1513969156245 & 447229787842 & 630295662774\\
6 & 1495588965589 & 430139844962 & 597392956895\\
7 & 1478116636400 & 413933575813 & 566799005853\\
8 & 1461490826524 & 398549661570 & 538304098986\\
9 & 1445655551753 & 383932282888 & 511722399708\\
10 & 1430559613518 & 370030503029 & 486888602283\\
11 & 1416156098413 & 356797735912 & 463655155787\\
12 & 1402401939248 & 344191284922 & 441889941403\\
13 & 1389257528970 & 332171941144 & 421474315362\\
14 & 1376686380197 & 320703631812 & 402301449327\\
15 & 1364654824165 & 309753111476 & 384274914724\\
16 & 1353131743893 & 299289689704 & 367307468650\\
17 & 1342088337101 & 289284990179 & 351320007547\\
18 & 1331497905078 & 279712736940 & 336240661410\\
19 & 1321335664253 & 270548564135 & 322004006438\\
20 & 1311578577670 & 261769846278 & 308550378051\\
21 & 1302205203949 & 253355546416 & 295825269418\\
22 & 1293195561658 & 245286079995 & 283778803157\\
23 & 1284531007297 & 237543192549 & 272365265934\\
\bottomrule
\end{tabular}
\end{table}

\begin{table}[htbp]
\centering\small\setlength{\tabcolsep}{4pt}
\caption{Upper bounds for the heat perturbation and the reflected sum;
scalar entries have denominator $10^{12}$.}
\label{tab:kernel-perturbation}\label{tab:kernel-reflection}
\begin{tabular}{rrrrrrr}
\toprule
$i$ & $10^{12}U_{\delta_0}$ & $10^{12}U_{\delta_1}$ & $10^{12}U_{\delta_2}$ &
$10^{12}U_{\rho_0}$ & $10^{12}U_{\rho_1}$ & $10^{12}U_{\rho_2}$\\
\midrule
0 & 2333875727 & 6985223600 & 28309848180 & 5463325 & 138148683 & 3493305246\\
1 & 4322690915 & 12563261269 & 48875386058 & 4711723 & 119143253 & 3012723258\\
2 & 6024142228 & 17095933238 & 64730258347 & 4063714 & 102757352 & 2598380159\\
3 & 7476349853 & 20751263792 & 76752580144 & 3504997 & 88629315 & 2241130672\\
4 & 8712138109 & 23670175701 & 85661206261 & 3023246 & 76447487 & 1933094121\\
5 & 9759821971 & 25971028934 & 92045073509 & 2607840 & 65943299 & 1667479318\\
6 & 10643868624 & 27753363561 & 96386976633 & 2249626 & 56885302 & 1438433720\\
7 & 11385455210 & 29100992955 & 99082869065 & 1940716 & 49074031 & 1240913513\\
8 & 12002940105 & 30084565440 & 100457558756 & 1674312 & 42337574 & 1070571687\\
9 & 12512262033 & 30763690033 & 100777494499 & 1444553 & 36527767 & 923661632\\
10 & 12927278778 & 31188703974 & 100261199527 & 1246390 & 31516902 & 796954084\\
11 & 13260055254 & 31402145247 & 99087799059 & 1075469 & 27194903 & 687665582\\
12 & 13521108993 & 31439981634 & 97404000815 & 928038 & 23466885 & 593396825\\
13 & 13719619770 & 31332638410 & 95329817649 & 800863 & 20251056 & 512079571\\
14 & 13863608939 & 31105859202 & 92963265577 & 691154 & 17476907 & 441930883\\
15 & 13960093162 & 30781428303 & 90384225819 & 596509 & 15083653 & 381413710\\
16 & 14015216409 & 30377777736 & 87657623581 & 514855 & 13018888 & 329202919\\
17 & 14034363511 & 29910498276 & 84836047532 & 444404 & 11237433 & 284156032\\
18 & 14022258009 & 29392770247 & 81961910699 & 383617 & 9700331 & 245288004\\
19 & 13983046614 & 28835727251 & 79069234829 & 331164 & 8373993 & 211749493\\
20 & 13920372221 & 28248763665 & 76185125123 & 285902 & 7229458 & 182808132\\
21 & 13837437138 & 27639794960 & 73330990009 & 246841 & 6241750 & 157832391\\
22 & 13737057913 & 27015478364 & 70523550716 & 213131 & 5389332 & 136277669\\
23 & 13576392076 & 26288961540 & 67521313978 & 184036 & 4653631 & 117674307\\
\bottomrule
\end{tabular}
\end{table}

Insert these upper bounds in \eqref{eq:new-errors}--\eqref{eq:new-G},
replacing $Z,\psi,\psi_2,\delta_j,\rho_j$ by their printed bounds.
Call the resulting rational numbers $C_i^{\rm cert}$ and
$G_i^{\rm cert}$. All error expressions increase with these
nonnegative inputs; the subtracted expression for $C$ increases,
and the expression for $G$ increases. Hence
$C_i^{\rm cert}\le C^*$ and $G_i^{\rm cert}\ge G^*$.
Exact rational arithmetic gives
\begin{gather*}
 \max_i\varepsilon_{0,i}^{\rm cert}<\frac1{50},\qquad
 \min_i(C_i^{\rm cert}-C_i)>2.05\cdot10^{-6},\\
 \min_i(G_i-G_i^{\rm cert})>1.10\cdot10^{-6},\\
 \min_i\{E_{20}(k_id_i)(2+k_iq_{i+1})-(2+k_iq_i)\}
                                      >2.78\cdot10^{-8},
\end{gather*}
where $k_i$ is the exact rational expression \eqref{eq:new-k}.
The row chains and domain conditions also hold in exact arithmetic,
and $8.63<k_i<10.32$ for all rows. The small transition margin
explains why all integers are retained exactly. These certificates
verify the finite inequalities; the analytic justification remains
Proposition~\ref{prop:two-density} and the two comparison lemmas.

\section{Comparison and scope}
The published unconditional benchmark is $0.2$; the elementary
seven-eighths strip alone gives $9/32$. The following table compares
the published bound with the successive versions of our conditional
comparison. Gomila's separate claim $0.1787854$ has no conjectural
premise, but remains unrefereed; it is neither validated nor used here.
\begin{center}
\begin{tabular}{lll}
\toprule
Bound on $\Lambda$ & Additional hypothesis & Source or method\\
\midrule
$0.2$ & none & Platt--Trudgian, Corollary 2\\
$0.1938798366\ldots$ & $\Rea\rho\le7/8$ & height $5$, whole-function Cauchy\\
$0.1797275414\ldots$ & $\Rea\rho\le7/8$ & height $4$, fixed-cutoff derivative\\
$0.17172$ & $\Rea\rho\le7/8$ & height $301/100$, triangle steps\\
$0.16408$ & $\Rea\rho\le7/8$ & height $301/100$, Euler steps\\
$0.13963$ & $\Rea\rho\le7/8$ & height $61/20$, derivative kernel\\
\bottomrule
\end{tabular}
\end{center}
For completeness, the height-$5$ row follows from \eqref{eq:disk}:
Cauchy on the unit disk about $x+5i$ gives $|F'/F|\le7/13$, hence
$S(x,5,t)>111/20-10^{-6}-7/13>5$. Its half-plane formula is
\[
 k_5(a)=4-\frac8{25-a},\qquad
 F_5(\sigma)=\frac1{50}
  +\frac1{k_5(a)}\log\frac{2+k_5(a)a}{2+k_5(a)/25},
 \quad a=(2\sigma-1)^2>1/25.
\]
This older formula improves $0.2$ only for
$\sigma<0.8832895754\ldots$, and satisfies the required comparison-time
restriction only up to $\sigma=0.9104380067\ldots$.
Rational brackets are $(0.88328,0.88330)$ and $(0.91043,0.91045)$,
verified by the same exponential certificates as above. In particular
neither formula may be extrapolated to $\sigma=1$ to claim an
unconditional improvement.

The approximation and derivative arguments here were checked against
the stated source formulas, including the unrestricted real part in
Proposition 6.2 and the condition $K+u\ge2$. This is an internal
mathematical check, not independent refereeing or formal verification.
The published boundary conclusion \eqref{eq:wall} and the finite RH
verification remain inputs. Neither the seven-eighths preprint~\cite{O} nor
Gordon's preprint~\cite{G} is validated by this calculation. In particular,
the earlier stronger value $0.1259625$ is not established by the present
argument. The derivative comparison preserves the same approximation band
and published boundary input. It does not extend that band below $y=3$;
no lower-height or variable-height numerical improvement is claimed.

\section*{Acknowledgment of AI assistance}
The author proposed removing the dependence on the unpublished inputs.
ChatGPT assisted with the replacement strategy, analytic derivations,
exact scalar and Bernstein-coefficient checks, interval verification of the
stepped tables, and drafting. The author is responsible for review
of the mathematical argument and the final manuscript. This working
note has not been independently refereed.

\end{document}